\title{Splitting transference inequalities \\ with the help of \\ Wolfgang Schmidt's parametric geometry of numbers.
                              \thanks{ This research was  supported by
                              RFBR (grant $\textup N^{\circ}$ 09--01--00371a) and
                              by the grant of the President of Russian Federation
                              $\textup N^\circ$ MK--1226.2010.1.
                             }}
\author{Oleg\,N.\,German}
\date{}
\theoremstyle{definition}
\newtheorem{definition}{Definition}
\theoremstyle{remark}
\theoremstyle{plain}
\newtheorem{theorem}{Theorem}
\newtheorem{lemma}{Lemma}
\newtheorem{proposition}{Proposition}
\newtheorem{corollary}{Corollary}
\newtheorem{classic}{Theorem}
\newtheorem{classicprime}[classic]{Theorem}
\renewcommand{\vec}[1]{\mathbf{#1}}
\renewcommand{\geq}{\geqslant}
\renewcommand{\leq}{\leqslant}
\renewcommand{\phi}{\varphi}
\newcommand{\R}{\mathbb{R}}
\newcommand{\Z}{\mathbb{Z}}
\newcommand{\La}{\Lambda}
\newcommand{\bPsi}{\underline{\Psi}}
\newcommand{\aPsi}{\overline{\Psi}}
\newcommand{\cB}{\mathcal{B}}
\newcommand{\cJ}{\mathcal{J}}
\newcommand{\cP}{\mathcal{P}}
\newcommand{\cQ}{\mathcal{Q}}
\newcommand{\gT}{\mathfrak{T}}
\newcommand{\ga}{\mathfrak{a}}
\newcommand{\gb}{\mathfrak{b}}
\newcommand{\tr}[1]{{#1}^\intercal}
\begin{document}

  \maketitle

  \begin{abstract}
    This paper is a sequel to our previous paper arXiv:1105.1554, where we defined two types of intermediate Diophantine exponents, connected them to Schmidt exponents and split Dyson's transference inequality into a chain of inequalities for intermediate exponents. Here we present splitting of some other transference inequalities involving both regular and uniform Diophantine exponents.
  \end{abstract}

  \section{Introduction}

  Given a matrix
  \[ \Theta=
     \begin{pmatrix}
       \theta_{11} & \cdots & \theta_{1m} \\
       \vdots & \ddots & \vdots \\
       \theta_{n1} & \cdots & \theta_{nm}
     \end{pmatrix},\qquad
     \theta_{ij}\in\R,\quad n+m\geq3, \]
  consider the system of linear equations
  \begin{equation} \label{eq:the_system}
    \Theta\vec x=\vec y
  \end{equation}
  with variables $\vec x\in\R^m$, $\vec y\in\R^n$.
  The classical measure of how well the space of solutions to this system can be approximated by integer points is defined as follows. Let $|\cdot|$ denote the sup-norm in the corresponding space.

  \begin{definition} \label{def:belpha_1}
    The supremum of the real numbers $\gamma$, such that there are arbitrarily large values of $t$ for which (resp. such that for every $t$ large enough) the system of inequalities
    \begin{equation} \label{eq:belpha_1_definition}
      |\vec x|\leq t,\qquad|\Theta\vec x-\vec y|\leq t^{-\gamma}
    \end{equation}
    has a nonzero solution in $(\vec x,\vec y)\in\Z^m\oplus\Z^n$, is called the \emph{regular} (resp. \emph{uniform}) \emph{Diophantine exponent} of $\Theta$ and is denoted by $\gb_1$ (resp. $\ga_1$).
  \end{definition}

  The \emph{transference principle} connects the problem of approximating the space of solutions to \eqref{eq:the_system} to the analogous problem for the system
  \begin{equation} \label{eq:transposed_system}
    \tr\Theta\vec y=\vec x,
  \end{equation}
  where $\tr\Theta$ denotes the transpose of $\Theta$. Let us denote the Diophantine exponents corresponding to $\tr\Theta$ by $\gb_1^\ast$ and $\ga_1^\ast$, respectively.

  The classical transference inequalities estimating $\gb_1$ in terms of $\gb_1^\ast$, and $\ga_1$ in terms of $\ga_1^\ast$ belong to A.~Ya.~Khintchine, V.~Jarn\'{\i}k, F.~Dyson and A.~Apfelbeck. In the next Section we remind some of these results and formulate their improvements obtained recently by M.~Laurent, Y.~Bugeaud and also by the author. Then, in Section \ref{sec:inter_exp}, we remind the definition of intermediate Diophantine exponents from \cite{german_inter_exp_I} and formulate the main results of this paper splitting the inequalities given in Section \ref{sec:known}. After that, in Section \ref{sec:schmidt_exp} we remind the definition of Schmidt's exponents and their relation to Diophantine exponents. Finally, in Section \ref{sec:proofs} we prove our main results.

  Notice that in \cite{german_inter_exp_I} we defined Diophantine exponents of two types. However, in this paper we shall confine our considerations to the exponents of the second type (in terminology of \cite{german_inter_exp_I}), not mentioning the exponents of the first type at all (except in this very paragraph). It is interesting whether any ``splitting'' results can be obtained for the exponents of the first type.

  \section{Known inequalities} \label{sec:known}

  \subsection{Regular exponents}

  In \cite{khintchine_palermo} A.~Ya.~Khintchine proved for $m=1$ his famous transference inequalities
  \begin{equation} \label{eq:khintchine_transference}
    \gb_1^\ast\geq n\gb_1+n-1,\qquad
    \gb_1\geq\frac{\gb_1^\ast}{(n-1)\gb_1^\ast+n}\,,
  \end{equation}
  which were generalized later by F.~Dyson \cite{dyson}, who proved that for arbitrary $n$, $m$
  \begin{equation} \label{eq:dyson_transference}
    \gb_1^\ast\geq\frac{n\gb_1+n-1}{(m-1)\gb_1+m}\,.
  \end{equation}
  While \eqref{eq:khintchine_transference} cannot be improved (see \cite{jarnik_1936_1}, \cite{jarnik_1936_2}) if only $\gb_1$ and $\gb_1^\ast$ are considered, stronger inequalities can be obtained if $\ga_1$ and $\ga_1^\ast$ are also taken into account. The corresponding result for $m=1$ belongs to M.~Laurent and Y.~Bugeaud (see \cite{laurent_2dim}, \cite{bugeaud_laurent_up_down}). They proved that if the system \eqref{eq:the_system} has no non-zero integer solutions, then
  \begin{equation} \label{eq:bugeaud_laurent}
    \frac{(\ga_1^\ast-1)\gb_1^\ast}{((n-2)\ga_1^\ast+1)\gb_1^\ast+(n-1)\ga_1^\ast}\leq
    \gb_1\leq
    \frac{(1-\ga_1)\gb_1^\ast-n+2-\ga_1}{n-1}\,.
  \end{equation}
  The inequalities \eqref{eq:bugeaud_laurent} were generalized to the case of arbitrary $n$, $m$ by the author in \cite{german_JNT}, where it was proved for arbitrary $n$, $m$ that if the space of integer solutions of \eqref{eq:the_system} is not a one-dimensional lattice, then along with \eqref{eq:dyson_transference} we have
  \begin{align}
    & \gb_1^\ast\geq
    \frac{(n-1)(1+\gb_1)-(1-\ga_1)}{(m-1)(1+\gb_1)+(1-\ga_1)}\,, \label{eq:loranoyadenie_2} \\
    & \gb_1^\ast\geq
    \frac{(n-1)(1+\gb_1^{-1})-(\ga_1^{-1}-1)}{(m-1)(1+\gb_1^{-1})+(\ga_1^{-1}-1)}
    \,, \label{eq:loranoyadenie_3}
  \end{align}
  with \eqref{eq:loranoyadenie_2} stronger than \eqref{eq:loranoyadenie_3} if and only if $\ga_1<1$.

  \subsection{Uniform exponents}

  V.~Jarn\'{\i}k and A.~Apfelbeck proved literal analogues of \eqref{eq:khintchine_transference} and \eqref{eq:dyson_transference} for the uniform exponents, i.e. with $\gb_1$, $\gb_1^\ast$ replaced by $\ga_1$, $\ga_1^\ast$, respectively (see \cite{jarnik_tiflis}, \cite{apfelbeck}). They also obtained some stronger inequalities of a more cumbersome appearance. Among them, lonely in its elegance, stands the \emph{equality}
  \begin{equation} \label{eq:jarnik_equality}
    \ga_1^{-1}+\ga_1^\ast=1
  \end{equation}
  proved by Jarn\'{\i}k for $n=1$, $m=2$. The results of Jarn\'{\i}k and Apfelbeck were improved by the author in \cite{german_JNT}, where it was proved that for arbitrary $n$, $m$ we have
  \begin{equation} \label{eq:my_inequalities_cases}
    \ga_1^\ast\geq
    \begin{cases}
      \dfrac{n-1}{m-\ga_1},\quad\ \ \text{ if }\ \ga_1\leq1, \\
      \dfrac{n-\ga_1^{-1\vphantom{\big|}}}{m-1},\quad\ \ \text{if }\ \ga_1\geq1.
    \end{cases}
  \end{equation}

  \section{Intermediate exponents} \label{sec:inter_exp}

  Set $d=m+n$. Denote by $\pmb\ell_1,\ldots,\pmb\ell_m$ the columns of the matrix
  \[ \begin{pmatrix}
       E_m\\
       \Theta
     \end{pmatrix}, \]
  where $E_m$ is the $m\times m$ unity matrix. Clearly,
  $\pmb\ell_1,\ldots,\pmb\ell_m$ span
  the space of solutions to the system \eqref{eq:the_system}. Let us set for each $k$-tuple $\sigma=\{i_1,\ldots,i_k\}$, $1\leq i_1<\ldots<i_k\leq m$,
  \begin{equation} \label{eq:L_sigma}
    \vec L_\sigma=\pmb\ell_{i_1}\wedge\ldots\wedge\pmb\ell_{i_k},
  \end{equation}
  denote by $\cJ_k$ the set of all the $k$-element subsets of $\{1,\ldots,m\}$, $k=0,\ldots,m$, and set $\vec L_\varnothing=1$.

  Let us also set $k_0=\max(0,m-p)$.

  \begin{definition} \label{def:ba}
    The supremum of the real numbers $\gamma$, such that there are arbitrarily large values of $t$ for which (resp. such that for every $t$ large enough) the system of inequalities
    \begin{equation} \label{eq:ba}
      \max_{\sigma\in\cJ_k}|\vec L_\sigma\wedge\vec Z|\leq t^{1-(k-k_0)(1+\gamma)},\qquad k=0,\ldots,m,
    \end{equation}
    has a nonzero solution in $\vec Z\in\wedge^p(\Z^d)$ is called the \emph{$p$-th regular (resp. uniform) Diophantine exponent} of $\Theta$ and is denoted by $\gb_p$ (resp. $\ga_p$).
  \end{definition}

  For $m=1$ the quantities $\gb_p$, $\ga_p$ were defined by Laurent in \cite{laurent_up_down}. The consistency of Definitions \ref{def:belpha_1} and \ref{def:ba} for arbitrary $n$, $m$ was proved in \cite{german_inter_exp_I} (see Propositions 4, 5 therein).
  Laurent and Bugeaud used the exponents $\gb_p$ to split \eqref{eq:khintchine_transference} into a chain of inequalities relating  $\gb_p$ to $\gb_{p+1}$. Namely, they proved that for $m=1$ we have  $\gb_1^\ast=\gb_n$ and
  \begin{equation} \label{eq:khintchine_transference_split}
    \gb_{p+1}\geq\frac{(n-p+1)\gb_{p}+1}{n-p}\,,\qquad
    \gb_{p}\geq\frac{p\gb_{p+1}}{\gb_{p+1}+p+1}\,,\qquad p=1,\ldots,n-1.
  \end{equation}
  Besides that, they proved for $m=1$ that if the system \eqref{eq:the_system} has no non-zero integer solutions, then we have  $\ga_1^\ast=\ga_n$ and
  \begin{equation} \label{eq:laurent_inter_mixed}
    \gb_2\geq\frac{\gb_1+\ga_1}{1-\ga_1}\,,\qquad
    \gb_{n-1}\geq\frac{1-\ga_n^{-1}}{\gb_n^{-1}+\ga_n^{-1}}\,,
  \end{equation}
  which, combined with \eqref{eq:khintchine_transference_split}, gave them \eqref{eq:bugeaud_laurent}.

  In \cite{german_inter_exp_I} we generalized \eqref{eq:khintchine_transference_split} and its analogue for the uniform exponents to the case of arbitrary $n$, $m$. We showed that
  \begin{equation} \label{eq:ab_via_ab_transposed}
    \gb_p^\ast=\gb_{d-p},\qquad\ga_p^\ast=\ga_{d-p},\qquad p=1,\ldots,d-1,
  \end{equation}
  where $\gb_p^\ast$ and $\ga_p^\ast$ are $p$-th regular and uniform Diophantine exponents of $\tr\Theta$, and proved

  \begin{theorem} \label{t:inter_dyfel}
    For each $p=1,\ldots,d-2$ the following statements hold.

    If $p\geq m$, then
    \begin{equation} \label{eq:inter_dyson_p_geq}
      (d-p-1)(1+\gb_{p+1})\geq(d-p)(1+\gb_p),
    \end{equation}
    \begin{equation} \label{eq:inter_apfel_p_geq}
      (d-p-1)(1+\ga_{p+1})\geq(d-p)(1+\ga_p).
    \end{equation}

    If $p\leq m-1$, then
    \begin{equation} \label{eq:inter_dyson_p_leq}
      (d-p-1)(1+\gb_p)^{-1}\geq(d-p)(1+\gb_{p+1})^{-1}-n,
    \end{equation}
    \begin{equation} \label{eq:inter_apfel_p_leq}
      (d-p-1)(1+\ga_p)^{-1}\geq(d-p)(1+\ga_{p+1})^{-1}-n.
    \end{equation}
  \end{theorem}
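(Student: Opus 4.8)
The plan is to deduce Theorem~\ref{t:inter_dyfel} from the parametric geometry of numbers, i.e.\ from Schmidt's description of the successive minima of the one-parameter family of symmetric convex bodies attached to $\Theta$. The key object is the $d$-dimensional lattice $\Lambda=\Z^d$ together with the family of distance functions built from the norms appearing in \eqref{eq:belpha_1_definition}; passing to the $p$-th exterior power, one obtains a family of convex bodies in $\wedge^p\R^d$ whose successive minima govern, via \eqref{eq:ba}, the exponents $\gb_p$ and $\ga_p$. My first step would be to make explicit the dictionary, recalled in Section~\ref{sec:schmidt_exp}, between $\gb_p,\ga_p$ and the lower/upper limits of the normalized $p$-th successive minimum function $L_p(q)$ of this family: roughly, $\gb_p$ corresponds to $\limsup$ and $\ga_p$ to $\liminf$ of an affine-renormalized version of $L_p(q)$, with the shift by $k_0=\max(0,m-p)$ in \eqref{eq:ba} accounting for the ``degenerate'' coordinates. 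Once this is set up, both \eqref{eq:inter_dyson_p_geq}--\eqref{eq:inter_apfel_p_geq} and \eqref{eq:inter_dyson_p_leq}--\eqref{eq:inter_apfel_p_leq} should become statements purely about the functions $L_p(q)$.

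The second step is to exploit the two structural facts from parametric geometry of numbers: (a) Mahler's duality, which relates the minima in $\wedge^p\R^d$ to those in $\wedge^{d-p}\R^d$ and underlies \eqref{eq:ab_via_ab_transposed}; and (b) a monotonicity/concavity-type comparison between consecutive exterior powers. The heart of the matter is a pointwise inequality between $L_p(q)$ and $L_{p+1}(q)$ (possibly at rescaled values of the parameter $q$): intuitively, a nearly-minimal $p$-dimensional subspace and one extra short vector produce a $(p+1)$-dimensional object, and conversely a minimal $(p+1)$-dimensional object contains a $p$-dimensional sublattice one controls. Translating such a pointwise inequality through the $\limsup$ (for $\gb$) and $\liminf$ (for $\ga$) is what will yield, uniformly, the regular statements \eqref{eq:inter_dyson_p_geq}, \eqref{eq:inter_dyson_p_leq} and simultaneously their uniform analogues \eqref{eq:inter_apfel_p_geq}, \eqref{eq:inter_apfel_p_leq} — the elegance of the parametric approach being precisely that the same combinatorial inequality serves both. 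The split into the cases $p\ge m$ and $p\le m-1$ reflects whether or not the shift $k_0$ in \eqref{eq:ba} is active: for $p\ge m$ we have $k_0=0$ and the exponents enter linearly (hence the ``$1+\gb$'' form), while for $p\le m-1$ the constraint involves fewer than $m$ ``active levels'' and a reciprocal appears, which is why \eqref{eq:inter_dyson_p_leq}, \eqref{eq:inter_apfel_p_leq} are stated in terms of $(1+\gb)^{-1}$ and carry the extra $-n$.

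Concretely, I would proceed as follows. First fix $p$ and write down, from Definition~\ref{def:ba}, the exact convex body $\cB_p(t)\subset\wedge^p\R^d$ whose successive minima control the left-hand sides of \eqref{eq:ba}; compute $\vol\cB_p(t)$ (a monomial in $t$ with exponent depending on $p,m,n$) so that Minkowski's theorem gives the basic bound $\lambda_1(\cB_p(t))\cdots\lambda_{\binom dp}(\cB_p(t))\asymp 1$. Second, identify among the $\binom dp$ minima the ``relevant'' one — the first minimum, by the definition via a single nonzero $\vec Z$ — and express both $\gb_p$ and $\ga_p$ through its asymptotics in $t$. Third, relate $\cB_{p}(t)$ and $\cB_{p+1}(t')$ by an explicit linear map: if $\pmb e_1,\dots,\pmb e_d$ is the standard basis, multiplication by $\pmb e_{i}$ sends $\wedge^p$ to $\wedge^{p+1}$, and one gets a packing/covering relation between the two bodies up to a power of $t$ that is exactly the numerology $(d-p)$ vs $(d-p-1)$. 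Fourth, push this relation through to the first minima, distinguishing whether the extra wedge factor comes from an $E_m$-column (active when $p\le m-1$) or not (when $p\ge m$). Fifth, translate back into exponents, taking $\limsup$ to get the $\gb$-inequalities and $\liminf$ to get the $\ga$-inequalities. The main obstacle I anticipate is Step four: controlling the first minimum (not the full product of minima) under the passage $\wedge^p\to\wedge^{p+1}$ requires a genuine geometric argument — Minkowski's second theorem only bounds products — so one needs either a transference-type lemma specific to decomposable forms or the full strength of Schmidt's template formalism to say that the relevant minimum behaves the expected way; getting the case distinction at $p=m-1$ versus $p=m$ to come out with the stated constants, rather than being off by a unit, is where the bookkeeping will be most delicate.
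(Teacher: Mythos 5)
Your general framework is right---this theorem is indeed proved via Schmidt's parametric geometry of numbers---but your concrete plan stalls exactly where you yourself flag a difficulty, and the idea that removes that difficulty is absent from the proposal. You want to relate the first successive minimum of a body in $\wedge^p\R^d$ directly to the first successive minimum of the corresponding body in $\wedge^{p+1}\R^d$ by wedging with a basis vector, and you correctly note that Minkowski's second theorem controls only \emph{products} of minima, so this inductive step would need a ``genuine geometric argument.'' The paper never performs that inductive step in the compound spaces at all. Mahler's theory of compound bodies, which identifies $\lambda_1$ of the $p$-th compound up to bounded factors with $\lambda_1\cdots\lambda_p$ of the original body, is invoked \emph{once}, in the dictionary of Proposition~\ref{prop:ba_via_Psis}: $(1+\gb_p)(\varkappa_p+\bPsi_p)=(1+\ga_p)(\varkappa_p+\aPsi_p)=d/n$, where $\bPsi_p,\aPsi_p$ are the $\liminf$ and $\limsup$ of the cumulative sums $\Psi_p(s)=\sum_{i=1}^{p}\psi_i(s)$ of the normalized logarithms of the first $p$ successive minima of a single parametric family $\cB(s)\subset\R^d$. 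Once in this language the step from $p$ to $p+1$ is essentially free: the monotonicity $\psi_1(s)\leq\cdots\leq\psi_d(s)$ together with the Minkowski mass balance $\Psi_d(s)\to0$ (see \eqref{eq:mink}) give the pointwise sandwich
\[
\frac{p+1}{p}\,\Psi_p(s)\ \leq\ \Psi_{p+1}(s)\ \leq\ \frac{d-p-1}{d-p}\,\Psi_p(s),
\]
which is Proposition~\ref{prop:precise_inter_dyfel}; taking $\liminf$ and $\limsup$ yields $\bPsi_{p+1}/(d-p-1)\leq\bPsi_p/(d-p)$ and $\aPsi_{p+1}/(d-p-1)\leq\aPsi_p/(d-p)$, and Theorem~\ref{t:inter_dyfel} drops out upon substituting back through Proposition~\ref{prop:ba_via_Psis}. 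The case split at $p=m$ is then purely the definition $\varkappa_p=\min\bigl(p,\tfrac mn(d-p)\bigr)$, not a separate geometric phenomenon.

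So: you guessed the right language, the right source of the case split (the shift $k_0$, equivalently which branch of $\varkappa_p$ is active), and the right obstruction. What is missing is the pivot from first minima of compound bodies to the cumulative sums $\Psi_p$ of minima of a single family of bodies in $\R^d$. That pivot is the whole point of the Schmidt--Summerer formalism here: the comparison of $\Psi_p$ and $\Psi_{p+1}$ is trivial, whereas the direct comparison of $\lambda_1(\wedge^p)$ with $\lambda_1(\wedge^{p+1})$ that you propose would require exactly the hard transference-type lemma you worry about. Without that pivot your argument has a real gap at your own ``Step four.''
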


  The first result of the current paper generalizes \eqref{eq:laurent_inter_mixed}. We prove

  \begin{theorem} \label{t:inter_loranoyadenie}
    Suppose that the space of integer solutions of \eqref{eq:the_system} is not a one-dimensional lattice. Then for $m=1$ we have
    \begin{equation} \label{eq:inter_loranoyadenie_m_is_1}
      \gb_2\geq\frac{\gb_1+\ga_1}{1-\ga_1}\,,
    \end{equation}
    and for $m\geq2$ we have
    \begin{equation} \label{eq:inter_loranoyadenie_m_geq_2}
      \gb_2\geq
      \begin{cases}
        \dfrac{\ga_1-1}{2+\gb_1-\ga_1}\,,\quad\text{ if }\ \ga_1\neq\infty, \\
        \dfrac{1-\ga_1^{-1} \vphantom{\frac{\big|}{}} }{\gb_1^{-1}+\ga_1^{-1}}\,.
      \end{cases}
    \end{equation}
  \end{theorem}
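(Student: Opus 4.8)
The natural strategy is to deduce Theorem~\ref{t:inter_loranoyadenie} from the parametric geometry of numbers, exactly in the spirit in which the author recasts Diophantine exponents as Schmidt exponents in the announced Section~\ref{sec:schmidt_exp}, so I expect the proof proper to be phrased there. First I would translate the hypothesis and the conclusion into the language of successive minima of the one-parameter family of parallelepipeds
\[
  \cP(t)=\Bigl\{\,\vec x\in\R^m,\ \vec y\in\R^n:\ |\vec x|\le e^{t},\ |\Theta\vec x-\vec y|\le e^{-t/?}\,\Bigr\}
\]
(with the appropriate normalisation so that the lattice $\Z^d$ has the family acting with determinant one), and record the standard dictionary: $\gb_1$, $\ga_1$ govern the limiting behaviour of the first minimum $\lambda_1$, while $\gb_2$ is read off from the compound body $\wedge^2$, i.e.\ from $\lambda_1\lambda_2$ of the original family. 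Concretely, $\gb_2$ large means that the product of the two smallest minima stays small for arbitrarily large $t$, and the point of the theorem is that a single integer point realising the $\gb_1$-approximation, together with the \emph{uniform} control coming from $\ga_1$ over a suitable range of $t$, forces a second, linearly independent, short vector — a Minkowski-type second-minimum argument.

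The key steps, in order, would be: (1)~Fix a best approximation vector $\vec v=(\vec x,\vec y)\in\Z^d$ attaining (up to $\e$) the exponent $\gb_1$ at a large height $t_0$, so $|\vec x|\le e^{t_0}$ and $|\Theta\vec x-\vec y|\approx e^{-\gb_1 t_0}$. (2)~Use the \emph{uniform} exponent $\ga_1$ to assert that at every height $t$ in a window beginning near $t_0$ there is \emph{some} nonzero integer point of norm-vector length $\lesssim e^{-(\text{something})t}$; pick such a point $\vec w$ at the moment the ``$\vec v$-tube'' stops being the only short direction. (3)~Check $\vec v$ and $\vec w$ are linearly independent (this is where the hypothesis that the integer solution space is not a one-dimensional lattice enters: if every short point were a multiple of $\vec v$, the solution space would be the line $\Q\vec v$, contradicting the hypothesis when $m\ge2$, and for $m=1$ it rules out the degenerate rational case). (4)~Estimate $|\vec v\wedge\vec w|$ in the compound parallelepiped $\wedge^2\cP(t)$ by multiplying the two length bounds and optimising over the free parameter $t$; the optimisation produces exactly the fractional-linear expressions $(\gb_1+\ga_1)/(1-\ga_1)$, $(\ga_1-1)/(2+\gb_1-\ga_1)$, and — after passing to the reciprocal family, which corresponds to replacing exponents by their ``$1+(\cdot)^{-1}$'' transforms as in Theorem~\ref{t:inter_dyfel} — the expression $(1-\ga_1^{-1})/(\gb_1^{-1}+\ga_1^{-1})$. (5)~Read the resulting bound on $\lambda_1\lambda_2$ back as a lower bound on $\gb_2$ via Definition~\ref{def:ba} with $p=2$, $k_0=\max(0,m-2)$.

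The split into the cases $m=1$ and $m\ge2$ reflects the value of $k_0$: for $m=1$ one has $k_0=0$ and the $\gb_2$-defining inequalities \eqref{eq:ba} range over $k=0,1$ only in a single block of the exterior power, giving the cleaner bound $(\gb_1+\ga_1)/(1-\ga_1)$, whereas for $m\ge2$ one has $k_0=\max(0,m-2)$ and the normalisation shifts, which is why the numerators change sign pattern and why two sub-bounds appear (one sharp when $\ga_1<1$, the reciprocal one otherwise, mirroring the dichotomy in \eqref{eq:loranoyadenie_2}–\eqref{eq:loranoyadenie_3} and \eqref{eq:my_inequalities_cases}). The main obstacle I anticipate is step~(2)–(3): extracting a \emph{second} linearly independent short point with quantitatively the right length from the uniform exponent, rather than merely one short point, and doing the bookkeeping of which height $t$ to evaluate the wedge at so that the uniform bound and the regular bound combine without loss. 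This is precisely the place where the parametric-geometry-of-numbers viewpoint pays off, since there the second minimum is visible as the next ``level'' in the combined diagram and the independence is automatic once $\lambda_2$ is genuinely separated from $\lambda_1$; reducing everything to a statement about the two lowest successive-minima functions of one family is what makes the optimisation in step~(4) a routine computation rather than an ad hoc estimate.
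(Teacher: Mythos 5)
Your high-level picture is on target: recast everything via the lattice $\La$ and the one-parameter family $\cB(s)$, deduce a bound on $\Psi_2=\psi_1+\psi_2$ by exhibiting a second short vector, translate back via Proposition~\ref{prop:ba_via_Psis}. You also correctly identify the hypothesis's role and the main obstacle (your steps (2)--(3)). But you leave that obstacle unresolved, and it is precisely what the paper's key Lemma~\ref{l:main} supplies; without it your outline is not a proof.

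The mechanism the paper uses is different from what you propose in two important ways. First, the paper does \emph{not} pick a best-approximation vector attaining $\gb_1$ and then hunt for a second vector via uniform approximation. Instead it works at each local minimum $s$ of $\psi_1(\cdot)$, where the minimizing vector has its two half-norms balanced, $\mu_s(\vec v_s)=\nu_s(\vec v_s)=\lambda_1(\cB(s))$; one then \emph{stretches} the parallelepiped $\lambda_1(\cB(s))\cB(s)$ in one of the two directions until Minkowski's theorem forces a second, non-collinear lattice point onto its boundary, and recognizes the stretched body as $\lambda_1(\cB(s'))\cB(s')$ for a computable $s'$ with $\lambda_1(\cB(s'))=\lambda_2(\cB(s'))$. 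The containment $\lambda_1(\cB(s))\cB(s)\subseteq\lambda_1(\cB(s'))\cB(s')$ together with that coincidence of the first two minima is exactly what Lemma~\ref{l:main} converts into a bound for $\psi_2(s)$ in terms of $\psi_1(s)$ and $\psi_1(s')$. The second short vector thus comes from Minkowski plus the scaling, not from the uniform exponent. Second, the uniform exponent enters only at the final passage to $\liminf$/$\limsup$: because $s'$ and $s''$ tend to $\infty$ with $s$, the quantity $\psi_1(s')$ (respectively $\psi_1(s'')$) is controlled by $\aPsi_1$, and $\psi_1(s)$ over the local minima by $\bPsi_1$. That is where $\ga_1$ and $\gb_1$ respectively appear in the final formula; $\ga_1$ is not what produces the second point. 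Your attribution of the $m=1$ versus $m\geq2$ dichotomy to $k_0$ is also off: $k_0=\max(0,m-2)$ vanishes for both $m=1$ and $m=2$, yet the split is between $m=1$ and $m\geq2$. The real cause is the normalization $\varkappa_2=\min\bigl(2,\tfrac{m}{n}(d-2)\bigr)$ in Proposition~\ref{prop:ba_via_Psis}, which equals $\tfrac{n-1}{n}$ when $m=1$ and $2$ when $m\geq2$; in the Schmidt-exponent formulation (Theorem~\ref{t:inter_loranoyadenie_Psied}) there is no case split on $m$ at all.
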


  The first inequality of \eqref{eq:inter_loranoyadenie_m_is_1} is exactly the first inequality of \eqref{eq:laurent_inter_mixed}. The second inequality of \eqref{eq:inter_loranoyadenie_m_geq_2} in view of \eqref{eq:ab_via_ab_transposed} gives the second inequality of \eqref{eq:laurent_inter_mixed}.

  It follows from Theorem \ref{t:inter_dyfel} that for $m\geq2$
  \begin{equation}
    (d-2)(1+\gb_{d-1})^{-1}\leq(1+\gb_2)^{-1}+m-2.
  \end{equation}
  Combining this inequality with \eqref{eq:inter_loranoyadenie_m_geq_2} we get \eqref{eq:loranoyadenie_2} and \eqref{eq:loranoyadenie_3}, in case $m\geq2$.

  The second result of this paper splits the inequalities \eqref{eq:my_inequalities_cases}. It is the following

  \begin{theorem} \label{t:inter_my_inequalities}
    For $m=1$ we have
    \begin{equation} \label{eq:inter_my_inequalities_m_is_1}
      \ga_2\geq(1-\ga_1)^{-1}-\frac{n-2}{n-1}\,.
    \end{equation}
    For $m\geq2$ we have
    \begin{equation} \label{eq:inter_my_inequalities_m_geq_2}
      \ga_2\geq
      \begin{cases}
        \dfrac{n-1}{-n-(d-2)(1-\ga_1)^{-1}}\,,\quad\text{ if }\ \ga_1\leq1, \\
        \dfrac{m-1\vphantom{\frac{|}{}}}{\phantom{-}n+(d-2)(\ga_1-1)^{-1}}\,,\quad\text{ if }\ \ga_1\geq1.
      \end{cases}
    \end{equation}
  \end{theorem}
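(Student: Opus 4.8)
The plan is to translate Theorem \ref{t:inter_my_inequalities} into the language of parametric geometry of numbers, which Section \ref{sec:schmidt_exp} sets up. Recall that the uniform exponents $\ga_p$ govern the behaviour of the successive minima functions of the one-parameter family of lattices attached to $\Theta$; more precisely, by the consistency results cited after Definition \ref{def:ba} and by \eqref{eq:ab_via_ab_transposed}, $\ga_1$ and $\ga_2$ can be read off from the uniform Schmidt-type exponents of the first and second compound lattices. So the first step is to fix a bona fide solution $\vec Z\in\wedge^2(\Z^d)$ of the system \eqref{eq:ba} with $k_0=\max(0,m-2)$ witnessing a value of $\gamma$ slightly below $\ga_2$, valid for all large $t$, and simultaneously record what the uniform hypothesis on $\ga_1$ says: for every large $t$ there is a nonzero $(\vec x,\vec y)\in\Z^m\oplus\Z^n$ with $|\vec x|\le t$ and $|\Theta\vec x-\vec y|\le t^{-\gamma'}$ for $\gamma'$ just below $\ga_1$. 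The two cases $\ga_1\le1$ and $\ga_1\ge1$ (and, for $m=1$, the single formula \eqref{eq:inter_my_inequalities_m_is_1}) will correspond to whether the relevant best approximation vector is "horizontal-dominated" or "vertical-dominated", i.e. to which of the two inequalities in \eqref{eq:my_inequalities_cases} is being split.

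Next I would run the standard wedge/duality argument: from the one-dimensional uniform approximation one builds a $1$-dimensional sublattice $\La_1\subset\Z^d$ all of whose first minimum is controlled, and from $\vec Z$ one obtains a $2$-dimensional object; the incompatibility of these two is forced by a volume (Minkowski) computation in the appropriate exterior power, exactly as \eqref{eq:khintchine_transference_split} and \eqref{eq:laurent_inter_mixed} were obtained in the $m=1$ case in \cite{laurent_up_down}, \cite{bugeaud_laurent_up_down}, and as Theorem \ref{t:inter_dyfel} was obtained in \cite{german_inter_exp_I}. The key inequality to extract is a lower bound on $|\vec L_\sigma\wedge\vec Z|$ for suitable $\sigma$ coming from the approximation vector, which when fed into \eqref{eq:ba} and optimised over $t$ yields the claimed bound on $\gamma$, hence on $\ga_2$. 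The bookkeeping of the exponents $1-(k-k_0)(1+\gamma)$ for $k=0,1,2$ against the sizes $t$ and $t^{-\gamma'}$ is what produces the denominators $-n-(d-2)(1-\ga_1)^{-1}$ and $n+(d-2)(\ga_1-1)^{-1}$; note $d-2=m+n-2$, which matches the arithmetic of summing the $k$-indexed constraints.

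The cleanest route, though, is probably to deduce Theorem \ref{t:inter_my_inequalities} from Theorem \ref{t:inter_loranoyadenie} together with Theorem \ref{t:inter_dyfel}, mimicking the way the text derives \eqref{eq:loranoyadenie_2}, \eqref{eq:loranoyadenie_3} from those two. Indeed \eqref{eq:inter_apfel_p_leq} and \eqref{eq:inter_apfel_p_geq}, chained from $p=2$ up to $p=d-1$, relate $\ga_2$ to $\ga_{d-1}=\ga_1^\ast$, and the "uniform analogue" of the mixed inequality — which is what \eqref{eq:inter_my_inequalities} really is — should follow by specialising the arguments of Theorem \ref{t:inter_loranoyadenie} to the uniform setting (replacing "for arbitrarily large $t$" by "for all large $t$" throughout, and using that then the approximation is available at *every* scale, which is precisely what makes the Jarník-type equality \eqref{eq:jarnik_equality} and its generalisations work). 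The main obstacle I anticipate is handling the degenerate boundary cases — $\ga_1=1$, where the two branches of \eqref{eq:inter_my_inequalities_m_geq_2} must agree and both give $\ga_2\ge (m-1)/(something)$; $\ga_1=\infty$ (infinite exponent, i.e. the $\ga_1^{-1}$ formulation); and the transition $m=1$ versus $m\ge2$, where the constant $k_0$ jumps — as well as making sure the "space of integer solutions is not a one-dimensional lattice" hypothesis, needed in Theorem \ref{t:inter_loranoyadenie}, is either inherited or shown to be unnecessary here. Verifying that the optimisation over $t$ is actually attained in the interior (so that one gets equality in the exponent count rather than a strict loss) is the other delicate point.
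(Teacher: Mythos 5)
Your high-level plan is sound --- translate to Schmidt's exponents via Propositions \ref{prop:ba_via_Psis}--\ref{prop:starred_ba_via_starred_Psis} and then argue at the level of the functions $\psi_p(s)$ --- and your instinct that the argument should parallel Theorem \ref{t:inter_loranoyadenie} is also correct. But the proposal stops short of the actual mechanism, and there are two concrete gaps.

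First, the engine of the paper's proof is a specific geometric observation (Lemma \ref{l:main}): if $\lambda_1(\cB(s))\cB(s)\subseteq\lambda_1(\cB(s'))\cB(s')$ and $\lambda_1(\cB(s'))=\lambda_2(\cB(s'))$, then $\psi_2(s)$ is bounded via $\psi_1(s)$ and $\psi_1(s')$ with explicit constants, split by whether $s'\leq s$ or $s'\geq s$. The required pair $(s,s')$ is manufactured by taking the parallelepiped $\lambda_1(\cB(s))\cB(s)$, noting its boundary contains a lattice point, and then inflating it along one block of coordinates (either the $\mu_s$- or the $\nu_s$-block) until a second, non-collinear lattice point is hit; reparametrising the inflated parallelepiped as $\lambda'\cB(s')$ gives exactly the hypotheses of the lemma. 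Your ``wedge/duality with a Minkowski volume computation'' description does not pin this down, and without it the exponent bookkeeping that produces the denominators $(n-1)+n\aPsi_1$ and $(m-1)-n\aPsi_1$ has no starting point.

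Second, you propose to deduce Theorem \ref{t:inter_my_inequalities} from Theorem \ref{t:inter_loranoyadenie} plus Theorem \ref{t:inter_dyfel}, but these govern \emph{different} exponents: Theorem \ref{t:inter_loranoyadenie} bounds the regular exponent $\gb_2$ while Theorem \ref{t:inter_my_inequalities} bounds the uniform exponent $\ga_2$, so no direct deduction is possible --- only the proof can be adapted, as you hedge. The adaptation is not merely ``replace arbitrarily large $t$ by all large $t$''. The regular case (Corollary \ref{cor:for_inter_loranoyadenie}) only controls $\Psi_2(s)$ at the local minima of $\psi_1$, where the extra condition $\mu_s(\vec v_s)=\nu_s(\vec v_s)=\lambda_1(\cB(s))$ holds, and then one takes a $\liminf$. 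For the uniform version one must control $\Psi_2(s)$ for \emph{every} $s$ and take a $\limsup$. This requires an additional ingredient your proposal does not mention: the two-sided sandwich $\tfrac{d-1}{d-2}\Psi_2(s)\leq\psi_1(s)\leq\tfrac12\Psi_2(s)$ from Proposition \ref{prop:precise_inter_dyfel}, which is used to replace $\psi_1(s)$ by $\psi_1(s')$ on the right-hand side of the lemma's bound and, depending on the sign of $2-d/(n+n\psi_1(s'))$, to merge the $\mu$-dominance and $\nu$-dominance cases into the single dichotomy on $\psi_1(s')\gtrless(m-n)/(2n)$ that appears in \eqref{eq:inter_my_inequalities_Psied}. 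Without Proposition \ref{prop:precise_inter_dyfel} the two branches of \eqref{eq:for_inter_my_inequalities} cannot be made to hold simultaneously for an arbitrary $s$, and the final $\limsup$ argument collapses. Finally, you worry about the one-dimensional-solution-lattice hypothesis: the paper shows it is indeed unnecessary here, but not by inheritance --- if \eqref{eq:the_system} has a nonzero integer solution then $\aPsi_1=-1<\frac{m-n}{2n}$ and the conclusion already follows from the coarser inequality \eqref{eq:Psi_inter_dyson}, a short separate check you would still need to make.
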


  Let us show that Theorem \ref{t:inter_my_inequalities} splits \eqref{eq:my_inequalities_cases} the very same way Theorem \ref{t:inter_loranoyadenie} splits \eqref{eq:loranoyadenie_2} and \eqref{eq:loranoyadenie_3}. It follows from Theorem \ref{t:inter_dyfel} that for $m=1$
  \begin{equation} \label{eq:apfel_shampur_m_is_1}
    1+\ga_n\geq(n-1)(1+\ga_2)
  \end{equation}
  and that for $m\geq2$
  \begin{equation} \label{eq:apfel_shampur_m_geq_2}
    (d-2)(1+\ga_{d-1})^{-1}\leq(1+\ga_2)^{-1}+m-2.
  \end{equation}
  Combining \eqref{eq:apfel_shampur_m_geq_2} with \eqref{eq:inter_my_inequalities_m_geq_2}, we get \eqref{eq:my_inequalities_cases} for  $m\geq2$. As for $m=1$, we always have $\ga_1\leq1$ in this case, so \eqref{eq:apfel_shampur_m_is_1} and \eqref{eq:inter_my_inequalities_m_is_1} indeed gives \eqref{eq:my_inequalities_cases} with $m=1$.

  \section{Schmidt's exponents} \label{sec:schmidt_exp}

  We start this Section with reminding the definition of Schmidt's exponents of the second type we gave in \cite{german_inter_exp_I} basing on \cite{schmidt_summerer}.

  Let $\La$ be a unimodular $d$-dimensional lattice in $\R^d$. Denote by $\cB_\infty^d$ the unit ball in sup-norm, i.e. the cube with vertices at the points $(\pm1,\ldots,\pm1)$. For each $d$-tuple $\pmb\tau=(\tau_1,\ldots,\tau_d)\in\R^d$ denote by $D_{\pmb\tau}$ the diagonal $d\times d$ matrix with $e^{\tau_1},\ldots,e^{\tau_d}$ on the main diagonal. Let us also denote by $\lambda_p(M)$ the $p$-th successive minimum of a compact symmetric convex body $M\subset\R^d$ (centered at the origin) with respect to the lattice $\La$.

  Suppose we have a path $\gT$ in $\R^d$ defined as $\pmb\tau=\pmb\tau(s)$, $s\in\R_+$, such that
  \begin{equation} \label{eq:sum_of_taus_is_zero}
    \tau_1(s)+\ldots+\tau_d(s)=0,\quad\text{ for all }s.
  \end{equation}
  In our further applications to Diophantine approximation we shall confine ourselves to a path that is a ray with the endpoint at the origin and all the functions $\tau_1(s),\ldots,\tau_d(s)$ being linear.

  Set $\cB(s)=D_{\pmb\tau(s)}\cB_\infty^d$. Consider the functions
  \[ \psi_p(\La,\gT,s)=\frac{\ln(\lambda_p(\cB(s)))}{s},\qquad p=1,\ldots,d. \]

  \begin{definition} \label{def:schmidt_Psi}
    We call the quantities
    \[ \bPsi_p(\La,\gT)=\liminf_{s\to+\infty}\bigg(\sum_{i=1}^p\psi_i(\La,\gT,s)\bigg)\,,\qquad
       \aPsi_p(\La,\gT)=\limsup_{s\to+\infty}\bigg(\sum_{i=1}^p\psi_i(\La,\gT,s)\bigg) \]
    \emph{the $p$-th lower} and \emph{upper Schmidt's exponents}, respectively.
  \end{definition}

  It appears that interpreting Diophantine exponents in terms of Schmidt's exponents simplifies many constructions and reveals the nature of some phenomena. In order to deliver this interpretation let us consider the lattice
  \begin{equation} \label{eq:La}
    \La=\Big\{ \tr{\Big(\langle\vec e_1,\vec z\rangle,\ldots,\langle\vec e_m,\vec z\rangle,\langle\pmb\ell_{m+1},\vec z\rangle,\ldots,\langle\pmb\ell_d,\vec z\rangle\Big)}\in\R^d \,\Big|\, \vec z\in\Z^d \Big\},
  \end{equation}
  where $\vec e_1,\ldots,\vec e_m$ are the first $m$ columns of the $d\times d$ unity matrix, and $\pmb\ell_{m+1},\ldots,\pmb\ell_d$ are the columns of the matrix
  \[ \begin{pmatrix}
       -\tr\Theta \\
       E_n
     \end{pmatrix}. \]
  Let us also consider the path $\gT:s\mapsto\pmb\tau(s)$ defined by
  \begin{equation} \label{eq:path}
    \tau_1(s)=\ldots=\tau_m(s)=s,\quad\tau_{m+1}(s)=\ldots=\tau_d(s)=-ms/n.
  \end{equation}

  Thus, we have connected to $\Theta$ the exponents $\bPsi_p(\La,\gT)$, $\aPsi_p(\La,\gT)$, which we shall simply denote by $\bPsi_p$ and $\aPsi_p$. We can do the same thing to $\tr\Theta$, and obtain the exponents we choose to denote by $\bPsi_p^\ast$ and $\aPsi_p^\ast$.

  In \cite{german_inter_exp_I} we proved the following statements:

  \begin{proposition} \label{prop:ba_via_Psis}
    Set $\varkappa_p=\min(p,\frac mn(d-p))$. Then
    \begin{equation} \label{eq:ba_via_Psis}
      (1+\gb_p)(\varkappa_p+\bPsi_p)=(1+\ga_p)(\varkappa_p+\aPsi_p)=d/n.
    \end{equation}
  \end{proposition}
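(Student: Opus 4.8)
The plan is to translate the defining inequalities \eqref{eq:ba} for $\gb_p$ and $\ga_p$ into statements about successive minima of the bodies $\cB(s)$ with respect to the lattice $\La$, and then to read off the relation \eqref{eq:ba_via_Psis} by matching exponents. The starting observation is that $\wedge^p\La$ is a lattice in $\wedge^p(\R^d)$, that $D_{\pmb\tau(s)}$ induces a diagonal action on $\wedge^p(\R^d)$ whose eigenvalues on the basis multivectors $\vec e_\sigma\wedge\pmb\ell_\rho$ are determined by \eqref{eq:path}, and that the wedge products $\vec L_\sigma$ from \eqref{eq:L_sigma} — together with their ``dual'' counterparts coming from the columns $\pmb\ell_{m+1},\ldots,\pmb\ell_d$ — are, up to the $\Gl$-change of coordinates built into \eqref{eq:La}, exactly the coordinates of a general multivector $\vec Z\in\wedge^p(\Z^d)$ grouped by how many ``$\vec x$-type'' indices they carry. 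Concretely, writing $\vec Z=\sum_\rho \vec Z_\rho$ where $\vec Z_\rho$ collects the components with exactly $m-k$ indices among $1,\ldots,m$ (equivalently $k$ complementary ones), one checks that $|\vec L_\sigma\wedge\vec Z|$ for $\sigma\in\cJ_k$ measures precisely the size of the block scaled by $e^{s}$-powers with the weight $k-k_0$, so that the system \eqref{eq:ba} is equivalent, after the substitution $t=e^{s\cdot(\text{const})}$, to the requirement $\lambda_p(\cB(s))\leq e^{-s\varepsilon}$ for a suitable normalization.

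Next I would carry out the bookkeeping of the normalizing constants. The path \eqref{eq:path} has weight $s$ on each of the first $m$ coordinates and $-ms/n$ on each of the last $n$; hence on a $p$-fold wedge carrying $j$ indices from the first block the diagonal scaling is $e^{s(j - m(p-j)/n)}=e^{s(j(1+m/n) - mp/n)}$. The two extreme blocks $j=\min(p,m)$ and $j=\max(0,p-n)$ produce the dominant and subdominant directions, and the difference of these two weights is what, after taking logarithms and dividing by $s$, converts ``$\lambda_p=e^{-s\gamma'}$'' into ``$\gamma = $ the Diophantine exponent'' with the shift $\varkappa_p=\min(p,\tfrac mn(d-p))$ appearing as the weight of the block $j=\min(p,m)$ (note $m-p+n = d-p$ when $p\le m$, matching $\tfrac mn(d-p)$ after dividing by the normalization $d/n$). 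The factor $d/n$ on the right-hand side is the total ``length'' of the path in the relevant units: it equals $(1+m/n)$ times... — more precisely it is $\sum$ of the positive weights $s\cdot m$ rescaled, and I would verify it by testing the trivial lattice $\La=\Z^d$ (i.e. $\Theta=0$), where $\gb_p=\ga_p=0$ and the successive minima are all computable, forcing $\bPsi_p=\aPsi_p=d/n-\varkappa_p$.

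Finally, the two equalities in \eqref{eq:ba_via_Psis} — one for the $\liminf$/regular pair, one for the $\limsup$/uniform pair — follow from the same computation applied, respectively, to the ``arbitrarily large $t$'' and ``all large $t$'' clauses in Definitions \ref{def:belpha_1}, \ref{def:ba}, using Mahler's theorem on successive minima of a body and its polar (equivalently \eqref{eq:ab_via_ab_transposed}) only implicitly, through the symmetric role of the two index blocks. The main obstacle I anticipate is the first step: verifying cleanly that $\max_{\sigma\in\cJ_k}|\vec L_\sigma\wedge\vec Z|$, as $k$ ranges over $0,\ldots,m$, genuinely controls all $\binom dp$ Plücker coordinates of $D_{\pmb\tau(s)}\vec Z$ and not merely the ``pure'' blocks — i.e. that there is no loss in replacing the full norm of $D_{\pmb\tau(s)}\vec Z$ by the finite family in \eqref{eq:ba}. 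This is a linear-algebra identity about the exterior algebra generated by $\pmb\ell_1,\ldots,\pmb\ell_m$ and the standard complementary vectors, and once it is in place the rest is the exponent arithmetic sketched above, which is routine.
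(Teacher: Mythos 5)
The paper does not actually prove Proposition~\ref{prop:ba_via_Psis}; it is imported from arXiv:1105.1554 (Propositions~4,~5 there), so there is no in-paper proof to compare against. Judged on its own, your general strategy — reinterpret the conditions \eqref{eq:ba} as a first-minimum statement for a body in $\wedge^p(\R^d)$ with respect to $\wedge^p\La$ and then match the weights of $D_{\pmb\tau(s)}$ on $\wedge^p$ — is the right one, and your identification of $\varkappa_p$ as the extreme weight of the block $j=\min(p,m)$ is correct. However, there are two genuine gaps. First, the claimed equivalence ``the system \eqref{eq:ba} is equivalent \dots to the requirement $\lambda_p(\cB(s))\leq e^{-s\varepsilon}$'' conflates two different objects. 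A single $\vec Z\in\wedge^p(\Z^d)$ satisfying \eqref{eq:ba} controls the \emph{first} successive minimum of a weighted box in $\wedge^p(\R^d)$ relative to $\wedge^p\La$, whereas $\Psi_p(s)=\frac1s\log\prod_{i=1}^p\lambda_i(\cB(s))$ is built from the first $p$ successive minima of $\cB(s)$ itself. The bridge between these is Mahler's theorem on compound (pseudocompound) convex bodies, $\prod_{i\leq p}\lambda_i(K,\La)\asymp\lambda_1(K^{(p)},\wedge^p\La)$; this is the essential geometry-of-numbers ingredient and it appears nowhere in your sketch. The only Mahler theorem you mention — on a body and its polar — is a different statement, used for \eqref{eq:ab_via_ab_transposed}, not for this proposition.

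Second, your sanity check is wrong, which suggests the exponent bookkeeping has a sign or shift error. For $\Theta=0$ the system \eqref{eq:the_system} has the solution $(\vec e_1,0)$, so $\gb_p=\ga_p=\infty$, not $0$. Likewise, with $\La=\Z^d$ one computes directly $\lambda_i(\cB(s))=e^{-s}$ for $i\leq m$ and $\lambda_i(\cB(s))=e^{ms/n}$ for $i>m$, hence $\psi_i(s)=-1$ or $m/n$ and $\Psi_p(s)=-\min(p,\tfrac mn(d-p))=-\varkappa_p$, not $d/n-\varkappa_p$. This is actually consistent with \eqref{eq:ba_via_Psis} in the degenerate limit $(1+\infty)\cdot 0$, but the values you predicted for both sides are incorrect. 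Before the ``routine exponent arithmetic'' can be trusted, you need to fix this test case and to state and use the compound-body theorem explicitly.
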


  \begin{proposition} \label{prop:starred_ba_via_starred_Psis}
    Set $\varkappa_p^\ast=\min(p,\frac nm(d-p))$. Then
    \begin{equation} \label{eq:starred_ba_via_starred_Psis}
      (1+\gb_p^\ast)(\varkappa_p^\ast+\bPsi_p^\ast)=(1+\ga_p^\ast)(\varkappa_p^\ast+\aPsi_p^\ast)=d/m.
    \end{equation}
  \end{proposition}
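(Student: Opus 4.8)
The plan is to deduce this proposition directly from Proposition~\ref{prop:ba_via_Psis} by transposition. By the very way the starred quantities were introduced, $\gb_p^\ast$ and $\ga_p^\ast$ are the $p$-th intermediate Diophantine exponents of $\tr\Theta$ in the sense of Definition~\ref{def:ba}, while $\bPsi_p^\ast$ and $\aPsi_p^\ast$ are the $p$-th lower and upper Schmidt exponents of the lattice and path attached to $\tr\Theta$ by the recipe \eqref{eq:La}, \eqref{eq:path}. So the only thing one really has to do is to apply Proposition~\ref{prop:ba_via_Psis} with $\tr\Theta$ in place of $\Theta$ and to translate the resulting identity back into the notation of the present proposition.

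First I would spell out what the construction of Section~\ref{sec:schmidt_exp} gives when fed $\tr\Theta$: since $\tr\Theta$ is an $m\times n$ matrix, its number of rows equals $m$ and its number of columns equals $n$, so running \eqref{eq:La}, \eqref{eq:path} for $\tr\Theta$ produces precisely the lattice and path obtained from \eqref{eq:La}, \eqref{eq:path} by the substitution $m\leftrightarrow n$ (together with $\Theta\leftrightarrow\tr\Theta$); the dimension $d=m+n$ is unaffected. This is an immediate check: the identity block becomes $E_n$, the block built from $-\tr{(\tr\Theta)}=-\Theta$ together with $E_m$ occupies the last $m$ coordinates, and the ray \eqref{eq:path} acquires slope $+1$ on the first $n$ coordinates and $-n/m$ on the last $m$. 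No reshuffling of coordinates or sign changes is needed, so unimodularity of the lattice and the shape of the cube $\cB_\infty^d$ are trivially preserved.

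Next I would simply read off the conclusion. Under the substitution $m\leftrightarrow n$ the quantity $\varkappa_p=\min(p,\tfrac mn(d-p))$ from Proposition~\ref{prop:ba_via_Psis} turns into $\min(p,\tfrac nm(d-p))=\varkappa_p^\ast$, and the constant $d/n$ turns into $d/m$. Hence the identity \eqref{eq:ba_via_Psis}, applied to $\tr\Theta$, reads
\[ (1+\gb_p^\ast)(\varkappa_p^\ast+\bPsi_p^\ast)=(1+\ga_p^\ast)(\varkappa_p^\ast+\aPsi_p^\ast)=d/m, \]
which is exactly \eqref{eq:starred_ba_via_starred_Psis}.

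I do not expect a genuine obstacle here: the statement is a formal consequence of Proposition~\ref{prop:ba_via_Psis} together with the symmetry of the construction under transposition. The only point deserving a careful (if routine) verification is the bookkeeping of the $m\leftrightarrow n$ swap --- making sure that $\varkappa_p^\ast$, the constant $d/m$, and the starred Schmidt and Diophantine exponents all match up under the substitution exactly as claimed, with no spurious index shift such as the $p\mapsto d-p$ shift appearing in \eqref{eq:ab_via_ab_transposed}. Here no such shift occurs, since we are relating the exponents of $\tr\Theta$ to the exponents of $\tr\Theta$ itself, not to those of $\Theta$.
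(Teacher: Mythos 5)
Your proposal is correct, and it captures what the paper itself tacitly relies on: Proposition~\ref{prop:starred_ba_via_starred_Psis} is just Proposition~\ref{prop:ba_via_Psis} applied to $\tr\Theta$, with $m$ and $n$ swapped throughout (the paper states both propositions together as imported from \cite{german_inter_exp_I} without reproving either, precisely because the second follows from the first by this transposition symmetry). Your bookkeeping of the $m\leftrightarrow n$ substitution --- $\varkappa_p\mapsto\varkappa_p^\ast$, $d/n\mapsto d/m$, no $p\mapsto d-p$ shift --- is accurate.
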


  Notice that in view of \eqref{eq:ab_via_ab_transposed} it follows from \eqref{eq:ba_via_Psis}, \eqref{eq:starred_ba_via_starred_Psis} that
  \begin{equation} \label{eq:Psis_via_Psis_transposed}
    \bPsi_p^\ast=\dfrac nm\bPsi_{d-p}\quad\text{ and }\quad\aPsi_p^\ast=\dfrac nm\aPsi_{d-p}\,.
  \end{equation}

  It was also shown implicitly in \cite{german_inter_exp_I} that
  \[ \gb_p\geq\ga_p\geq\frac{d}{n\varkappa_p}-1, \]
  which is equivalent to
  \[ -\varkappa_p\leq\bPsi_p\leq\aPsi_p\leq0. \]

  Let us now translate Theorems \ref{t:inter_loranoyadenie}, \ref{t:inter_my_inequalities} into the language of Schmidt's exponents. Theorem \ref{t:inter_loranoyadenie} turns into

  \begin{theorem} \label{t:inter_loranoyadenie_Psied}
    Suppose that the space of integer solutions of \eqref{eq:the_system} is not a one-dimensional lattice. Then
    \begin{equation} \label{eq:inter_loranoyadenie_Psied}
      \bPsi_2\leq
      \begin{cases}
        2\bPsi_1+d\cdot\dfrac{\aPsi_1-\bPsi_1}{n+n\aPsi_1}\,,\quad\text{ if }\ \aPsi_1\neq-1, \\
        2\bPsi_1+d\cdot\dfrac{\aPsi_1-\bPsi_1 \vphantom{\frac{\big|}{}} }{m-n\aPsi_1}\,.
      \end{cases}
    \end{equation}
  \end{theorem}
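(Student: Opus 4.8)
The plan is to deduce Theorem \ref{t:inter_loranoyadenie_Psied} purely formally from Theorem \ref{t:inter_loranoyadenie} by substituting the dictionary of Proposition \ref{prop:ba_via_Psis}. For $p=1$ we have $\varkappa_1=\min(1,\tfrac mn(d-1))=1$ (since $d-1\geq n$ forces $\tfrac mn(d-1)\geq m\geq1$ whenever $m\geq1$, and equality only in degenerate low-dimensional cases which one checks separately), so \eqref{eq:ba_via_Psis} gives
\[
  1+\gb_1=\frac{d/n}{1+\bPsi_1}\,,\qquad 1+\ga_1=\frac{d/n}{1+\aPsi_1}\,,
\]
hence $\ga_1=\infty$ corresponds exactly to $\aPsi_1=-1$. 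For $p=2$ we have $\varkappa_2=\min(2,\tfrac mn(d-2))$; when $m\geq2$ or $n$ is small this is $2$, and \eqref{eq:ba_via_Psis} gives $1+\gb_2=(d/n)/(2+\bPsi_2)$ — the cases where $\varkappa_2<2$ occur only for $m=1$, $n\geq2$, and I will treat $m=1$ using $\varkappa_2=\min(2,\tfrac{d-2}{n})$ directly. First I would record these identities; then I would take each inequality of \eqref{eq:inter_loranoyadenie_m_is_1}, \eqref{eq:inter_loranoyadenie_m_geq_2}, clear denominators, and rewrite everything in terms of $\bPsi_1,\aPsi_1,\bPsi_2$.

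Concretely, in the principal case $m\geq2$, $\ga_1\neq\infty$, the inequality $\gb_2\geq(\ga_1-1)/(2+\gb_1-\ga_1)$ becomes, after substituting $\gb_2=\tfrac{d/n}{2+\bPsi_2}-1$, $\gb_1=\tfrac{d/n}{1+\bPsi_1}-1$, $\ga_1=\tfrac{d/n}{1+\aPsi_1}-1$, an inequality between rational functions of $\bPsi_1,\aPsi_1,\bPsi_2$. The key algebraic step is that the factors $(1+\bPsi_1)$ and $(1+\aPsi_1)$ in the various denominators combine so that the right-hand side simplifies to $2\bPsi_1+d\cdot\tfrac{\aPsi_1-\bPsi_1}{n+n\aPsi_1}$; equivalently one checks $2+\bPsi_2\leq 2+2\bPsi_1+d\cdot\tfrac{\aPsi_1-\bPsi_1}{n(1+\aPsi_1)}$, i.e. that the upper bound on $\gb_2$-inverse matches the stated upper bound on $\bPsi_2$. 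The second branch of \eqref{eq:inter_loranoyadenie_m_geq_2}, namely $\gb_2\geq\tfrac{1-\ga_1^{-1}}{\gb_1^{-1}+\ga_1^{-1}}$, should similarly translate into $\bPsi_2\leq 2\bPsi_1+d\cdot\tfrac{\aPsi_1-\bPsi_1}{m-n\aPsi_1}$; here the useful observation is that $\ga_1^{-1}=\tfrac{\aPsi_1+1}{d/n-1-\aPsi_1}$ and the combination $\gb_1^{-1}+\ga_1^{-1}$ produces the denominator $m-n\aPsi_1$ after multiplying through by $(d/n-1-\bPsi_1)(d/n-1-\aPsi_1)$ and using $d/n-1=m/n$.

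For the case $m=1$, I would start from \eqref{eq:inter_loranoyadenie_m_is_1}, $\gb_2\geq\tfrac{\gb_1+\ga_1}{1-\ga_1}$, but now use $\varkappa_2=\min(2,\tfrac{n-1}{n})=\tfrac{n-1}{n}$ when $n\geq2$ (since $d=n+1$), so $(1+\gb_2)(\tfrac{n-1}{n}+\bPsi_2)=\tfrac{n+1}{n}$. Substituting this together with $\varkappa_1=1$ into the inequality should, after simplification, again land on the first branch of \eqref{eq:inter_loranoyadenie_Psied} with $m=1$, $d=n+1$ — note $n-n\aPsi_1$ and $m-n\aPsi_1=1-n\aPsi_1$ differ, and one must verify the $m=1$ computation selects the correct branch (or that both branches coincide in the relevant range, analogously to how \eqref{eq:apfel_shampur_m_is_1} and \eqref{eq:inter_my_inequalities_m_is_1} reproduce \eqref{eq:my_inequalities_cases}). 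I expect the main obstacle to be bookkeeping: keeping the signs straight when inverting (the map $x\mapsto (d/n)/(1+x)-1$ is orientation-reversing, so ``$\geq$'' for exponents becomes ``$\leq$'' for the $\bPsi$'s, and one must be careful that $1+\aPsi_1$ can be negative precisely when $\ga_1<0$ is impossible — in fact $\aPsi_1\in[-\varkappa_1,0]=[-1,0]$, so $1+\aPsi_1\geq0$ and $n+n\aPsi_1\geq0$, which is what makes the clearing of denominators legitimate), and confirming that the degenerate cases $\varkappa_2<2$ versus $\varkappa_2=2$ are dispatched by exactly the two displayed branches. Everything else is a routine, if lengthy, manipulation of linear fractional expressions, so I would present it compactly, perhaps verifying the key identity $d/n-1=m/n$ once and then quoting it.
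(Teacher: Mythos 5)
Your proposal is circular as a proof of Theorem~\ref{t:inter_loranoyadenie_Psied}. In the paper, the logical order runs in the opposite direction from the one you assume: the paper proves Theorem~\ref{t:inter_loranoyadenie_Psied} directly (Section~\ref{sec:proofs}, via Lemma~\ref{l:main}, Corollary~\ref{cor:for_inter_loranoyadenie} and the limit argument with local minima of $\psi_1$), and then Theorem~\ref{t:inter_loranoyadenie} is obtained from it by the dictionary of Propositions~\ref{prop:ba_via_Psis}, \ref{prop:starred_ba_via_starred_Psis}. Theorem~\ref{t:inter_loranoyadenie} is a \emph{new} result for $m\geq2$; nowhere in the paper (or in the literature it cites) is it established independently of Theorem~\ref{t:inter_loranoyadenie_Psied}. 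So ``deduce Theorem~\ref{t:inter_loranoyadenie_Psied} purely formally from Theorem~\ref{t:inter_loranoyadenie}'' presupposes exactly what is to be proved. The translation computations you outline (the $\varkappa_p$ values, the identification $\ga_1=\infty\Leftrightarrow\aPsi_1=-1$, the linear-fractional substitutions) are correct and are, in effect, what the paper calls ``Theorem~\ref{t:inter_loranoyadenie} turns into Theorem~\ref{t:inter_loranoyadenie_Psied}'' in Section~\ref{sec:schmidt_exp}, but establishing that two statements are equivalent is not the same as proving either of them.

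A partial salvage is possible only for $m=1$: there \eqref{eq:inter_loranoyadenie_m_is_1} is Laurent--Bugeaud's result \eqref{eq:laurent_inter_mixed}, which is proved by different means, so your translation would give Theorem~\ref{t:inter_loranoyadenie_Psied} in that case --- though you would still need to handle separately the situation in which \eqref{eq:the_system} has non-zero integer solutions but the solution space is not one-dimensional, since Laurent--Bugeaud assume no non-zero integer solutions while the theorem here assumes only ``not a one-dimensional lattice''. For $m\geq2$ no such external source exists, and the actual content of the proof --- which your proposal omits entirely --- is the parametric argument: Lemma~\ref{l:main} relates $\psi_2(s)$ to $\psi_1(s')$ at a nearby parameter $s'$ where the first two successive minima coincide, Corollary~\ref{cor:for_inter_loranoyadenie} produces such $s'$ (and $s''$) from local minima of $\psi_1$, and the theorem follows by passing to $\liminf$/$\limsup$, using \eqref{eq:tending_to_infty} to guarantee $s',s''\to\infty$. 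That geometric mechanism is the missing idea; the algebraic dictionary alone cannot replace it.
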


  Theorem \ref{t:inter_my_inequalities} turns into

  \begin{theorem} \label{t:inter_my_inequalities_Psied}
    We have
    \begin{equation} \label{eq:inter_my_inequalities_Psied}
      \aPsi_2\leq
      \begin{cases}
        \dfrac{(d-2)\aPsi_1}{(n-1)+n\aPsi_1}\,,\quad\text{ if }\ \aPsi_1\geq\dfrac{m-n}{2n}\,, \\
        \dfrac{(d-2)\aPsi_1 \vphantom{\frac{\big|}{}} }{(m-1)-n\aPsi_1}\,,\quad\text{ if }\ \aPsi_1\leq\dfrac{m-n}{2n}\,.
      \end{cases}
    \end{equation}
  \end{theorem}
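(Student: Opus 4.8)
The plan is to derive Theorem~\ref{t:inter_my_inequalities_Psied} directly from Theorem~\ref{t:inter_my_inequalities} via the dictionary supplied by Proposition~\ref{prop:ba_via_Psis}, exactly as the authors indicate by saying ``Theorem~\ref{t:inter_my_inequalities} turns into'' this statement. First I would record the instances of \eqref{eq:ba_via_Psis} needed: for $p=1$ we have $\varkappa_1=\min(1,\tfrac mn(d-1))$, and since $d-1\geq n$ (as $d=m+n$ and $m\geq1$) this equals $1$, so $(1+\ga_1)(1+\aPsi_1)=d/n$; for $p=2$ we have $\varkappa_2=\min(2,\tfrac mn(d-2))$. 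The case split in Theorem~\ref{t:inter_my_inequalities_Psied} on the sign of $\aPsi_1-\tfrac{m-n}{2n}$ must be matched against the case split on $\ga_1\lessgtr1$ in Theorem~\ref{t:inter_my_inequalities} and against the two possible values of $\varkappa_2$; so the first real task is to check that $\ga_1\leq1 \iff \aPsi_1\geq\tfrac{m-n}{2n}$, which follows from $1+\ga_1=\tfrac{d}{n(1+\aPsi_1)}$ by solving $\ga_1\leq1$, i.e. $\tfrac{d}{n(1+\aPsi_1)}\leq2$, i.e. $1+\aPsi_1\geq\tfrac{d}{2n}$, i.e. $\aPsi_1\geq\tfrac{d-2n}{2n}=\tfrac{m-n}{2n}$.

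Next I would substitute. In the regime $\ga_1\leq1$ (equivalently $\aPsi_1\geq\tfrac{m-n}{2n}$) one should also verify which branch of $\varkappa_2$ is active; when $m\geq2$ and $\aPsi_1$ is near this threshold one expects $\varkappa_2=2$ in the range relevant to the first branch and $\varkappa_2=\tfrac mn(d-2)$ for the second, and the substitution of $\ga_2=\tfrac{d}{n(\varkappa_2+\aPsi_2)}-1$ together with the two expressions in \eqref{eq:inter_my_inequalities_m_geq_2} should reproduce, after clearing denominators, the two displayed bounds on $\aPsi_2$. For instance, in the branch $\ga_1\leq1$, write $\ga_2\geq\tfrac{n-1}{-n-(d-2)(1-\ga_1)^{-1}}$, replace $\ga_1$ and $\ga_2$ by their $\Psi$-expressions, and simplify: since $1-\ga_1=2-\tfrac{d}{n(1+\aPsi_1)}=\tfrac{2n(1+\aPsi_1)-d}{n(1+\aPsi_1)}=\tfrac{2n\aPsi_1-(m-n)}{n(1+\aPsi_1)}$, the quantity $(1-\ga_1)^{-1}$ and hence the whole right-hand side becomes a rational function of $\aPsi_1$, and a short computation should collapse it to $\tfrac{(d-2)\aPsi_1}{(n-1)+n\aPsi_1}$ after accounting for $\varkappa_2$. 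The $m=1$ case \eqref{eq:inter_my_inequalities_m_is_1} should be handled the same way and seen to be subsumed in (or consistent with) the first branch, using that $m=1$ forces $\ga_1\leq1$ and $\varkappa_2=\min(2,\tfrac{n-1}{n})=\tfrac{n-1}{n}$ when $n\geq2$.

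The main obstacle I anticipate is bookkeeping rather than ideas: one must be careful that the inequalities do not flip sign when passing from $\ga_p$ to $\aPsi_p$, since $\aPsi_p=\tfrac{d}{n(\varkappa_p+\ga_p\cdot0+\dots)}$ — more precisely $\varkappa_p+\aPsi_p=\tfrac{d}{n(1+\ga_p)}$ is a decreasing function of $\ga_p$, so a lower bound on $\ga_2$ turns into an upper bound on $\aPsi_2$, which is why the conclusion is an upper bound; but one must confirm that the denominators appearing after clearing fractions are positive in the relevant range (e.g. that $(n-1)+n\aPsi_1>0$ whenever $\aPsi_1\geq\tfrac{m-n}{2n}$, which holds because then $n\aPsi_1\geq\tfrac{m-n}{2}>-\tfrac{n}{2}>-(n-1)$ for $n\geq2$; the low-dimensional edge cases $n=1$ or $n=2$ need a separate glance), so that the direction of the inequality is preserved. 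A secondary point is to make sure the two branches of the $\Psi$-statement exhaust all cases and agree on the boundary $\aPsi_1=\tfrac{m-n}{2n}$, i.e. that the two right-hand sides of \eqref{eq:inter_my_inequalities_Psied} coincide there, which they should since both come from $\ga_1=1$. Once these sign and positivity checks are in place, the proof is a direct algebraic translation with no further input needed beyond Theorems~\ref{t:inter_my_inequalities} and Proposition~\ref{prop:ba_via_Psis}.
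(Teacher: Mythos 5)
Your approach is circular within the logical structure of the paper. You propose to derive Theorem~\ref{t:inter_my_inequalities_Psied} from Theorem~\ref{t:inter_my_inequalities} via the dictionary of Proposition~\ref{prop:ba_via_Psis}, reading the phrase ``Theorem~\ref{t:inter_my_inequalities} turns into'' as a license to treat Theorem~\ref{t:inter_my_inequalities} as already established. But the paper never proves Theorem~\ref{t:inter_my_inequalities} independently: the whole point of Section~\ref{sec:schmidt_exp} is that the two statements are \emph{equivalent} under the dictionary, and the paper then says explicitly ``In the next Section we prove Theorems~\ref{t:inter_loranoyadenie_Psied}, \ref{t:inter_my_inequalities_Psied}.'' The actual proof runs in the opposite direction: one proves Theorem~\ref{t:inter_my_inequalities_Psied} directly and deduces Theorem~\ref{t:inter_my_inequalities} from it. So what you have outlined is (the routine half of) the equivalence check, not a proof of the $\Psi$-statement.

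The paper's genuine proof goes through a geometric lemma on the parametric family $\cB(s)$. Lemma~\ref{l:main} compares $\psi_2(s)$ with $\psi_1(s)$ and $\psi_1(s')$ for a second time $s'$ at which $\lambda_1(\cB(s'))=\lambda_2(\cB(s'))$ and $\lambda_1(\cB(s))\cB(s)\subseteq\lambda_1(\cB(s'))\cB(s')$. Corollary~\ref{cor:for_inter_my_inequalities} then produces such an $s'$ for every $s$ (using Minkowski's theorem to construct the auxiliary parallelepipeds $\cP_\nu$ or $\cQ_\mu$), and combines the resulting bound with the two-sided estimate $\frac{d-1}{d-2}\Psi_2(s)\leq\psi_1(s)\leq\frac12\Psi_2(s)$ from Proposition~\ref{prop:precise_inter_dyfel} to eliminate $\psi_1(s)$ and obtain bounds on $\Psi_2(s)$ in terms of $\psi_1(s')$ alone. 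Passing to the $\limsup$ then gives \eqref{eq:inter_my_inequalities_Psied}. None of this appears in your proposal, and none of it can be replaced by the translation step. To fix the proof, you would need to reproduce this geometric argument (or something equivalent) rather than invoking Theorem~\ref{t:inter_my_inequalities}.

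Two smaller remarks on the translation you do carry out, in case it is used later to recover Theorem~\ref{t:inter_my_inequalities}. The computation $\varkappa_1=1$ and the equivalence $\ga_1\leq1\iff\aPsi_1\geq\frac{m-n}{2n}$ are correct. However, your statement that ``$\varkappa_2=2$ in the range relevant to the first branch and $\varkappa_2=\frac mn(d-2)$ for the second'' is confused: $\varkappa_2=\min\left(2,\frac mn(d-2)\right)$ is a constant depending only on $m,n$, not on which branch one is in. For $m\geq2$ one always has $\varkappa_2=2$; for $m=1$ one always has $\varkappa_2=\frac{n-1}{n}$. This is precisely why the $\Psi$-formulation unifies the $m=1$ and $m\geq2$ cases of Theorem~\ref{t:inter_my_inequalities}, a point the paper highlights.
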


  As we see, this point of view relieves us of singling out the case $m=1$. In the next Section we prove Theorems \ref{t:inter_loranoyadenie_Psied}, \ref{t:inter_my_inequalities_Psied}.

  \section{Proof of Theorems \ref{t:inter_loranoyadenie_Psied}, \ref{t:inter_my_inequalities_Psied}} \label{sec:proofs}

  Having $\La$ and $\gT$ fixed by \eqref{eq:La}, \eqref{eq:path}, let us write $\psi_p(s)$ instead of $\psi_p(\La,\gT,s)$. Let us also set
  \[ \Psi_p(s)=\sum_{i=1}^p\psi_i(s). \]
  In \cite{german_inter_exp_I} we showed that
  \begin{equation} \label{eq:mink}
    0\leq-\Psi_d(s)=O(s^{-1}),
  \end{equation}
  whence we derived that for every $p$ within the range $1\leq p\leq d-2$
  \begin{equation} \label{eq:Psi_inter_dyson}
    \frac{\bPsi_{p+1}}{d-p-1}\leq\frac{\bPsi_p}{d-p}\qquad\text{ and }\qquad\frac{\aPsi_{p+1}}{d-p-1}\leq\frac{\aPsi_p}{d-p}\,,
  \end{equation}
  which is the very Theorem \ref{t:inter_dyfel} reformulated in terms of Schmidt's exponents. Now we shall need a more precise version of \eqref{eq:Psi_inter_dyson}.

  \begin{proposition} \label{prop:precise_inter_dyfel}
    For every $p$ within the range $1\leq p\leq d-2$ and every $s>0$ we have
    \begin{equation} \label{eq:Psi_sandwich}
      \frac{p+1}{p}\Psi_p(s)\leq\Psi_{p+1}(s)\leq\frac{d-p-1}{d-p}\Psi_p(s).
    \end{equation}
  \end{proposition}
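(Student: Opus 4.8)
The plan is to work directly with the successive minima $\lambda_1(\cB(s)) \le \ldots \le \lambda_d(\cB(s))$ of the parallelepiped $\cB(s) = D_{\pmb\tau(s)}\cB_\infty^d$ with respect to $\La$, and to exploit the monotonicity $\psi_1(s) \le \ldots \le \psi_d(s)$ together with Minkowski's second theorem in the sharp form \eqref{eq:mink}. Writing $\Psi_p(s) = \sum_{i=1}^p \psi_i(s)$, the two claimed inequalities are of the same flavour, and each should follow from a clean averaging argument. For the left inequality $\tfrac{p+1}{p}\Psi_p(s) \le \Psi_{p+1}(s)$: since the sequence $\psi_1(s),\ldots,\psi_d(s)$ is nondecreasing, the average of the first $p+1$ terms is at least the average of the first $p$ terms, i.e. $\tfrac{1}{p+1}\Psi_{p+1}(s) \ge \tfrac{1}{p}\Psi_p(s)$, which is exactly the left inequality after clearing denominators. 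This step uses \emph{only} monotonicity of the $\psi_i$, and no information about $\Psi_d$.

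For the right inequality $\Psi_{p+1}(s) \le \tfrac{d-p-1}{d-p}\Psi_p(s)$, I would argue symmetrically but now feeding in \eqref{eq:mink}, i.e. $\Psi_d(s) \le 0$ (the bound $-\Psi_d(s) = O(s^{-1})$ is not needed here, only $\Psi_d(s) \le 0$, which is Minkowski). Consider the \emph{tail} sums: monotonicity of $\psi_i$ gives that the average of the last $d-p$ terms $\psi_{p+1}(s),\ldots,\psi_d(s)$ is at least the average of the last $d-p-1$ terms $\psi_{p+2}(s),\ldots,\psi_d(s)$, hence
\[
  \frac{\Psi_d(s)-\Psi_p(s)}{d-p} \ \ge\ \frac{\Psi_d(s)-\Psi_{p+1}(s)}{d-p-1}.
\]
Cross-multiplying (both denominators positive for $p \le d-2$) and simplifying, this rearranges to
\[
  (d-p-1)\bigl(\Psi_{p+1}(s)-\Psi_p(s)\bigr)\ \le\ \Psi_d(s)-\Psi_p(s).
\]
Using $\Psi_d(s) \le 0$ on the right-hand side and collecting terms then yields $(d-p-1)\Psi_{p+1}(s) \le (d-p)\Psi_p(s)$, which is precisely the right inequality of \eqref{eq:Psi_sandwich}. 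The role of Minkowski is thus only to discard the harmless nonpositive term $\Psi_d(s)$.

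I do not expect a serious obstacle: the whole argument rests on the elementary fact that for a nondecreasing finite sequence the running averages are nondecreasing, applied once from the front (giving the left bound, unconditionally) and once from the back (giving the right bound, after invoking $\Psi_d(s)\le0$). The only point requiring a little care is bookkeeping with the signs of the denominators $d-p$ and $d-p-1$, which are strictly positive exactly in the stated range $1 \le p \le d-2$, so all the inequalities above may be cross-multiplied without reversal. Taking $\liminf$ and $\limsup$ in $s$ recovers \eqref{eq:Psi_inter_dyson}, confirming consistency with Theorem~\ref{t:inter_dyfel}; the present per-$s$ statement is the refinement that the subsequent proofs of Theorems~\ref{t:inter_loranoyadenie_Psied} and \ref{t:inter_my_inequalities_Psied} will require.
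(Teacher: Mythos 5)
Your treatment of the left inequality is fine and matches the paper's idea: for a nondecreasing sequence the front running averages are nondecreasing, which gives $\tfrac1{p+1}\Psi_{p+1}(s)\geq\tfrac1p\Psi_p(s)$ at once. (The paper phrases the same fact as $\psi_{p+1}(s)\geq\tfrac1p\Psi_p(s)$ and adds $\Psi_p(s)$; the content is identical.)

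The right inequality, however, has a genuine direction error that propagates. For a nondecreasing sequence the tail averages are \emph{nonincreasing} in the number of terms included: passing from the last $d-p-1$ terms to the last $d-p$ terms adjoins the \emph{smallest} element $\psi_{p+1}(s)$, which can only pull the average down. So the correct comparison is
\begin{equation*}
  \frac{\Psi_d(s)-\Psi_p(s)}{d-p}\ \leq\ \frac{\Psi_d(s)-\Psi_{p+1}(s)}{d-p-1}\,,
\end{equation*}
with $\leq$, not $\geq$ as you wrote. Moreover your claimed rearrangement $(d-p-1)\bigl(\Psi_{p+1}(s)-\Psi_p(s)\bigr)\leq\Psi_d(s)-\Psi_p(s)$ does not follow from your displayed inequality, nor from the corrected one: cross-multiplying the corrected version actually yields $(d-p-1)\bigl(\Psi_{p+1}(s)-\Psi_p(s)\bigr)\leq\Psi_d(s)-\Psi_{p+1}(s)$, i.e.\ $(d-p)\Psi_{p+1}(s)\leq\Psi_d(s)+(d-p-1)\Psi_p(s)$. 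Finally, your stated conclusion $(d-p-1)\Psi_{p+1}(s)\leq(d-p)\Psi_p(s)$ is not the right inequality of \eqref{eq:Psi_sandwich}; that inequality reads $(d-p)\Psi_{p+1}(s)\leq(d-p-1)\Psi_p(s)$. The underlying strategy — bound $\psi_{p+1}(s)$ above by the tail average of the last $d-p$ terms, then drop $\Psi_d(s)\leq0$ — is exactly the paper's, and it does work once the directions are straightened out: $\psi_{p+1}(s)\leq\tfrac{\Psi_d(s)-\Psi_p(s)}{d-p}\leq\tfrac{-\Psi_p(s)}{d-p}$, and adding $\Psi_p(s)$ gives $\Psi_{p+1}(s)\leq\tfrac{d-p-1}{d-p}\Psi_p(s)$. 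So the approach is right, but the execution needs to be corrected before it qualifies as a proof.
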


  \begin{proof}
    In view of \eqref{eq:mink}, it follows from the inequalities $\psi_i(s)\leq\psi_{i+1}(s)$, $i=1,\ldots,d-1$, that
    \[ \frac 1p\sum_{i=1}^p\psi_i(s)\leq\psi_{p+1}(s)\leq\frac{-1}{d-p}\sum_{i=1}^p\psi_i(s), \]
    which immediately implies \eqref{eq:Psi_sandwich}.
  \end{proof}

  The following observation is the crucial point for proving Theorems \ref{t:inter_loranoyadenie_Psied}, \ref{t:inter_my_inequalities_Psied}.

  \begin{lemma} \label{l:main}
    Suppose $s,s'\in\R_+$ satisfy the conditions
    \begin{equation} \label{eq:main_subseteq}
      \lambda_1(\cB(s))\cB(s)\subseteq\lambda_1(\cB(s'))\cB(s'),
    \end{equation}
    \begin{equation} \label{eq:main_equals}
      \lambda_1(\cB(s'))=\lambda_2(\cB(s')).
    \end{equation}
    Then
    \begin{equation} \label{eq:main}
      \psi_2(s)\leq
      \begin{cases}
        \psi_1(s)+d\cdot\dfrac{\psi_1(s')-\psi_1(s)}{n+n\psi_1(s')}\,,\quad\text{ if }\ s'\leq s\ \text{ and }\ \psi_1(s')\neq-1, \\
        \psi_1(s)+d\cdot\dfrac{\psi_1(s')-\psi_1(s) \vphantom{\frac{\big|}{}} }{m-n\psi_1(s')}\,,\quad\text{ if }\ s'\geq s.
      \end{cases}
    \end{equation}
  \end{lemma}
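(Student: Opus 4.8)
The plan is to use that $\cB(s)$ and $\cB(s')$ are boxes with edges parallel to the coordinate axes: $\cB(s)=\{\vec x\in\R^d\colon|x_i|\le e^{\tau_i(s)},\ 1\le i\le d\}$, and the same for $s'$. For such boxes $r\cB(s')\subseteq\mu\cB(s)$ holds exactly when $\mu\ge r\cdot\max_{1\le i\le d}e^{\tau_i(s')-\tau_i(s)}$. I would apply this with $r=\lambda_1(\cB(s'))$: by \eqref{eq:main_equals} we have $\lambda_2(\cB(s'))=\lambda_1(\cB(s'))$, so the box $\lambda_1(\cB(s'))\cB(s')$ contains two linearly independent points of $\La$; these then lie in $\mu\cB(s)$ with $\mu=\lambda_1(\cB(s'))\cdot\max_i e^{\tau_i(s')-\tau_i(s)}$, whence $\lambda_2(\cB(s))\le\mu$. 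Because of \eqref{eq:path} the maximum is attained on the last $n$ coordinates, with value $e^{m(s-s')/n}$, when $s'\le s$, and on the first $m$ coordinates, with value $e^{s'-s}$, when $s'\ge s$. Taking logarithms, dividing by $s$, and putting $t=s'/s$, this yields
\[ \psi_2(s)\le\begin{cases}t\psi_1(s')+\tfrac mn(1-t),& s'\le s,\\ t\bigl(1+\psi_1(s')\bigr)-1,& s'\ge s.\end{cases} \]

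The second ingredient is \eqref{eq:main_subseteq}, which I would read in the same box language: $\lambda_1(\cB(s))\cB(s)\subseteq\lambda_1(\cB(s'))\cB(s')$ means $\lambda_1(\cB(s))\,e^{\tau_i(s)}\le\lambda_1(\cB(s'))\,e^{\tau_i(s')}$ for every $i$. When $s'\le s$ the strongest of these constraints comes from a coordinate $i\le m$ and rearranges to $s\bigl(1+\psi_1(s)\bigr)\le s'\bigl(1+\psi_1(s')\bigr)$; when $s'\ge s$ it comes from a coordinate $i>m$ and rearranges to $s\bigl(\psi_1(s)-\tfrac mn\bigr)\le s'\bigl(\psi_1(s')-\tfrac mn\bigr)$. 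The first of these gives a lower bound for $t$ and the second an upper bound for $t$; substituting the appropriate bound into the corresponding line of the display above and clearing denominators (using $d=m+n$) reproduces exactly the two cases of \eqref{eq:main}.

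The step that really needs care --- and the one I expect to be the main obstacle --- is checking that the signs permit these substitutions. First, Minkowski's second theorem gives $\lambda_1(\cB(s))\cdots\lambda_d(\cB(s))\le1$ (since $\vol\cB(s)=2^d$ and $\covol\La=1$), hence $\lambda_1(\cB(s))\le1$ and $\psi_1(s)\le0$, and likewise $\psi_1(s')\le0$, so $\psi_1(s')-\tfrac mn<0$. Second, \eqref{eq:main_equals} forces $\psi_1(s')\ge-1$: were $\psi_1(s')<-1$, each of the first $m$ semi-axes of $\lambda_1(\cB(s'))\cB(s')$ would have length $\lambda_1(\cB(s'))e^{s'}<1$, so the two linearly independent lattice points in that box would have all their first $m$ coordinates zero; but by \eqref{eq:La} such a lattice point carries a nonzero integer vector in its last $n$ coordinates, forcing $\lambda_1(\cB(s'))e^{-ms'/n}\ge1$, i.e. $\psi_1(s')\ge\tfrac mn>0$ --- a contradiction. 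So $1+\psi_1(s')\ge0$, and it is strictly positive once $\psi_1(s')\ne-1$ is also assumed (the hypothesis present in the case $s'\le s$). With $1+\psi_1(s')>0$ the first rearrangement gives $t\ge\frac{1+\psi_1(s)}{1+\psi_1(s')}$, and since the $t$-coefficient $\psi_1(s')-\tfrac mn$ is negative this lower bound may be inserted; with $\psi_1(s')-\tfrac mn<0$ the second rearrangement gives $0<t\le\frac{m/n-\psi_1(s)}{m/n-\psi_1(s')}$, and since the $t$-coefficient $1+\psi_1(s')$ is now nonnegative this upper bound may be inserted. The borderline $\psi_1(s')=-1$ can occur only when $s'\ge s$, and there both the claimed bound and its right-hand side equal $-1$, so it needs no separate treatment. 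Once these sign facts are in place, only routine algebra remains.
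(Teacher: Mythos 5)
Your proof is correct and follows essentially the same route as the paper: read the box inclusion coordinate‑by‑coordinate, use $\lambda_1(\cB(s'))=\lambda_2(\cB(s'))$ to bound $\lambda_2(\cB(s))$, and combine the two resulting linear relations in $t=s'/s$. You are in fact slightly more careful than the paper --- it asserts equalities $\lambda_1(\cB(s))e^{s}=\lambda_1(\cB(s'))e^{s'}$ (resp.\ $\lambda_1(\cB(s))e^{-ms/n}=\lambda_1(\cB(s'))e^{-ms'/n}$), which the stated hypotheses only give as $\leq$, and leaves the sign checks $\psi_1\leq0$ and $1+\psi_1(s')\geq0$ implicit, both of which you verify and correctly show suffice.
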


  \begin{proof}
    Suppose that $s'\leq s$. Then it follows from \eqref{eq:main_subseteq} and \eqref{eq:main_equals} that 
    \[ \lambda_1(\cB(s))e^s=\lambda_1(\cB(s'))e^{s'}\geq1 \]
    and
    \[ \lambda_2(\cB(s))e^{-ms/n}\leq\lambda_2(\cB(s'))e^{-ms'/n}=\lambda_1(\cB(s'))e^{-ms'/n}, \]
    i.e.
    \begin{equation} \label{eq:s_prime_small_via_s}
      s(1+\psi_1(s))=s'(1+\psi_1(s'))\geq0
    \end{equation}
    and
    \begin{equation} \label{eq:s_prime_small_ps_2_leq_psi_1}
      s(\psi_2(s)-m/n)\leq s'(\psi_1(s')-m/n)
    \end{equation}
    Combining \eqref{eq:s_prime_small_via_s} and \eqref{eq:s_prime_small_ps_2_leq_psi_1} we get the first inequality of \eqref{eq:main}.
    
    Suppose now that $s'\geq s$. Then it follows from \eqref{eq:main_subseteq} and \eqref{eq:main_equals} that
    \[ \lambda_1(\cB(s))e^{-ms/n}=\lambda_1(\cB(s'))e^{-ms'/n}<1 \]
    and
    \[ \lambda_2(\cB(s))e^s\leq\lambda_2(\cB(s'))e^{s'}=\lambda_1(\cB(s'))e^{s'}, \]
    i.e.
    \begin{equation} \label{eq:s_prime_large_via_s}
      s(\psi_1(s)-m/n)=s'(\psi_1(s')-m/n)<0
    \end{equation}
    and
    \begin{equation} \label{eq:s_prime_large_ps_2_leq_psi_1}
      s(1+\psi_2(s))\leq s'(1+\psi_1(s'))
    \end{equation}
    Combining \eqref{eq:s_prime_large_via_s} and \eqref{eq:s_prime_large_ps_2_leq_psi_1} we get the second inequality of \eqref{eq:main}.
  \end{proof}

  For each $\vec z=\tr{(z_1,\ldots,z_d)}\in\R^d$ and each $s>0$ let us set
  \[ \mu_s(\vec z)=e^{-s}\max_{1\leq i\leq m}|z_i|\qquad\text{ and }\qquad\nu_s(\vec z)=e^{ms/n}\max_{m<i\leq d}|z_i|. \]
  Then
  \[ \cB(s)=\Big\{ \vec z\in\R^d \,\Big|\, \mu_s(\vec z)\leq1,\ \nu_s(\vec z)\leq1 \Big\}. \]
  The parallelepiped $\lambda_1(\cB(s))\cB(s)$ contains no non-zero points of $\La$ in its interior and contains at least one pair of such points in its boundary. Of these points let us choose an arbitrary point and denote it by $\vec v_s$. Obviously, the maximal of the quantities $\mu_s(\vec v_s)$, $\nu_s(\vec v_s)$ equals $\lambda_1(\cB(s))$.

  \begin{corollary} \label{cor:for_inter_loranoyadenie}
    For each $s>0$, such that
    \begin{equation} \label{eq:for_inter_loranoyadenie_condition}
      \mu_s(\vec v_s)=\nu_s(\vec v_s)=\lambda_1(\cB(s)),
    \end{equation}
    there are $s',s''>0$, such that 
    \[ s(1+\psi_1(s))\leq s'\leq s\leq s''\leq s(1-(n/m)\psi_1(s)) \]
    and
    \begin{equation} \label{eq:for_inter_loranoyadenie}
      \Psi_2(s)\leq
      \begin{cases}
        2\psi_1(s)+d\cdot\dfrac{\psi_1(s')-\psi_1(s)}{n+n\psi_1(s')}\,,\quad\text{ if }\ \psi_1(s')\neq-1, \\
        2\psi_1(s)+d\cdot\dfrac{\psi_1(s'')-\psi_1(s) \vphantom{\frac{\big|}{}} }{m-n\psi_1(s'')}\,.
      \end{cases}
    \end{equation}
  \end{corollary}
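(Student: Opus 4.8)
The plan is to deduce Corollary~\ref{cor:for_inter_loranoyadenie} from Lemma~\ref{l:main} by producing two auxiliary parameters $s'\le s$ and $s''\ge s$ at which the second successive minimum of $\cB$ catches up with the first one, i.e.\ at which \eqref{eq:main_equals} holds, while keeping the nesting \eqref{eq:main_subseteq}. First I would exploit the hypothesis \eqref{eq:for_inter_loranoyadenie_condition}: since the point $\vec v_s$ realises $\mu_s(\vec v_s)=\nu_s(\vec v_s)=\lambda_1(\cB(s))$, it sits on \emph{both} pairs of opposite facets of the parallelepiped $\lambda_1(\cB(s))\cB(s)$, namely on a ``$\mu$-facet'' and on a ``$\nu$-facet''. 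Shrinking $s$ (resp.\ enlarging $s$) rescales the two blocks of coordinates against each other: as $s$ decreases the $\mu$-direction contracts and the $\nu$-direction dilates, and vice versa. I would track how $\vec v_s$ moves relative to the normalised body as the parameter varies, and use the fact that $\lambda_1$ and $\lambda_2$ are continuous functions of the parameter together with the obvious monotonicity of $\mu$ and $\nu$ along the path.

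The core of the argument is an intermediate-value / continuity step. Consider decreasing the parameter from $s$ down to $s_- := s(1+\psi_1(s))$, the value at which $\lambda_1(\cB(\cdot))e^{(\cdot)}$ first reaches the lower bound $1$ forced by Minkowski, cf.\ \eqref{eq:mink}; at that endpoint the lattice point $\vec v_s$ (which is unchanged in $\Z^d$) has $\mu$-coordinate of absolute size exactly $1$ after normalisation, so it has essentially become a ``first minimum'' vector whose defining constraint is the $\mu$-constraint alone, while along the way the $\nu$-constraint was the slack one. At the original value $s$ the roles were balanced. Hence somewhere in between there must be a value $s'\in[s_-,s]$ at which a \emph{second} independent lattice vector touches the boundary of $\lambda_1(\cB(s'))\cB(s')$, forcing $\lambda_1(\cB(s'))=\lambda_2(\cB(s'))$; this is \eqref{eq:main_equals}. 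The inclusion \eqref{eq:main_subseteq} for this $s'\le s$ is immediate because $\vec v_s$ lies on the boundary of both $\lambda_1(\cB(s))\cB(s)$ and of the scaled body at $s'$, so the two parallelepipeds, being critical (containing no interior lattice points, one lattice point on the boundary), are nested in the right direction. Symmetrically, increasing the parameter up to $s_+ := s(1-(n/m)\psi_1(s))$, where $\lambda_1(\cB(\cdot))e^{-m(\cdot)/n}$ hits $1$, produces an $s''\in[s,s_+]$ with $\lambda_1(\cB(s''))=\lambda_2(\cB(s''))$ and $\lambda_1(\cB(s))\cB(s)\subseteq\lambda_1(\cB(s''))\cB(s'')$. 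This gives the claimed bracketing $s(1+\psi_1(s))\le s'\le s\le s''\le s(1-(n/m)\psi_1(s))$.

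With $s'$ and $s''$ in hand the conclusion is mechanical. Apply the first case of Lemma~\ref{l:main} to the pair $(s,s')$ (here $s'\le s$ and, provided $\psi_1(s')\neq-1$, the first alternative of \eqref{eq:main} is available) to bound $\psi_2(s)$; adding $\psi_1(s)$ to both sides and using $\Psi_2(s)=\psi_1(s)+\psi_2(s)$ yields the first line of \eqref{eq:for_inter_loranoyadenie}. Apply the second case of Lemma~\ref{l:main} to the pair $(s,s'')$ (here $s''\ge s$) and again add $\psi_1(s)$ to obtain the second line. The only genuine subtlety — and the step I expect to be the main obstacle — is the rigorous justification of the intermediate-value claim: one must argue carefully that as the parameter moves from $s$ toward $s_\mp$ the ``gap'' $\lambda_2(\cB(\cdot))/\lambda_1(\cB(\cdot))$ does close, which rests on the fact that at $s$ the vector $\vec v_s$ is pinned by \emph{two} distinct constraints of equal size, so that an arbitrarily small perturbation of the parameter already makes $\vec v_s$ ineligible to be the \emph{sole} minimising direction unless a second direction has appeared. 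Degenerate configurations (e.g.\ $\lambda_1=\lambda_2$ already at $s$, or $\psi_1(s)=-1$) have to be handled as limiting or trivial cases, but they do not affect the final inequalities.
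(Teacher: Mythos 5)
Your overall strategy---reduce to Lemma~\ref{l:main} by producing $s'\le s$ and $s''\ge s$ at which $\lambda_1=\lambda_2$, one for each side of the case split, and then observe that the endpoints $s(1+\psi_1(s))$ and $s(1-(n/m)\psi_1(s))$ come from $\psi_1(s)=s^{-1}\ln\lambda_1(\cB(s))$---is the same as the paper's. But the way you manufacture $s'$ and $s''$ is different, and as you yourself anticipate, that is exactly where the argument has a real gap.

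You propose to vary the parameter $s'$ directly and invoke continuity/intermediate value to find a point where $\lambda_1(\cB(s'))=\lambda_2(\cB(s'))$, and then you assert that the inclusion \eqref{eq:main_subseteq} is ``immediate because $\vec v_s$ lies on the boundary of both \ldots\ so the two parallelepipeds, being critical, are nested.'' Neither half of this is justified. First, it is not established that $\vec v_s$ lies on the boundary of $\lambda_1(\cB(s'))\cB(s')$ at all: for $s'<s$ one has $\mu_{s'}(\vec v_s)=\lambda e^{s-s'}$, hence $\lambda_1(\cB(s'))\le\lambda e^{s-s'}$, and strict inequality is possible if some other lattice vector becomes shorter in the new norm, in which case $\vec v_s$ is strictly outside $\lambda_1(\cB(s'))\cB(s')$ and the required inclusion $\lambda\cB(s)\subseteq\lambda_1(\cB(s'))\cB(s')$ fails (the $\mu$-side of the latter is then strictly smaller). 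Second, the claim that ``an arbitrarily small perturbation already makes $\vec v_s$ ineligible to be the sole minimising direction'' is false: for $s'$ slightly below $s$ the body $\lambda_1(\cB(s'))\cB(s')$ can perfectly well have $\vec v_s$ as its unique (up to sign) boundary lattice point, with $\lambda_2$ still strictly larger. So your intermediate-value step is not backed by a sign change of any concrete function, and the nesting is not automatic.

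The paper avoids all of this by constructing $s'$ and $s''$ through an explicit one-parameter family of parallelepipeds rather than by perturbing $s$. For $\mu_s(\vec v_s)=\lambda$ it considers
\[ \cP_\nu=\bigl\{\vec z:\mu_s(\vec z)\le\lambda,\ \nu_s(\vec z)\le\nu\lambda\bigr\},\qquad \nu\ge1, \]
i.e.\ it keeps the $\mu$-facets fixed (so $\lambda\cB(s)\subseteq\cP_\nu$ and $\vec v_s$ stays on the boundary for all $\nu$) and pushes out only the $\nu$-facets until, at the largest $\nu$ with empty interior, non-collinear lattice points appear on the boundary; Minkowski's theorem gives $1\le\nu\le\lambda^{-d/n}$. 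Then one simply rewrites $\cP_\nu=\lambda'\cB(s')$ with $\lambda'=\lambda\nu^{n/d}$ and $s'=s-(n/d)\ln\nu$, and both hypotheses \eqref{eq:main_subseteq}, \eqref{eq:main_equals} of Lemma~\ref{l:main} hold by construction, with $s+\ln\lambda\le s'\le s$. The symmetric family $\cQ_\mu$ (fix the $\nu$-facets, push out the $\mu$-facets) produces $s''$. In other words, the parametrization you want is not ``$s'$ with $\lambda_1=\lambda_2$'' but ``the deformed critical box, re-read as a scaled $\cB(s')$''; this bakes in exactly the two properties your continuity argument leaves open. To repair your proof you would essentially have to re-derive this construction.
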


  \begin{proof}
    Let us show that the relation $\mu_s(\vec v_s)=\lambda_1(\cB(s))$ implies the existence of an $s'\leq s$ satisfying the conditions of Lemma \ref{l:main}. Denote $\lambda=\lambda_1(\cB(s))$. Let
    \[ \cP_\nu=\Big\{ \vec z\in\R^d \,\Big|\, \mu_s(\vec z)\leq\lambda,\ \nu_s(\vec z)\leq\nu\lambda \Big\} \]
    be the minimal (w.r.t inclusion) parallelepiped containing no non-zero points of $\La$ in its interior. The existence of such a parallelepiped follows from Minkowski's convex body theorem. It also implies that $1\leq\nu\leq\lambda^{-d/n}$. Then
    \[ \lambda\cB(s)\subseteq\cP_{\nu}=\lambda'\cB(s'), \]
    where $\lambda'=\lambda\nu^{n/d}$, $s'=s-(n/d)\ln\nu$. For $\lambda'$, $s'$ we have
    \[ \lambda'\geq\lambda,\qquad s+\ln\lambda\leq s'\leq s. \]
    On the other hand, $\cP_\nu$ contains non-collinear points of $\La$ in its boundary, so $\lambda_1(\cB(s'))=\lambda_2(\cB(s'))=\lambda'$. Thus, $s$, $s'$ satisfy \eqref{eq:main_subseteq}, \eqref{eq:main_equals}.

    Now let us consider the relation $\nu_s(\vec v_s)=\lambda_1(\cB(s))$. By Minkowski's convex body theorem there is a $\mu$ in the interval $1\leq\mu\leq\lambda^{-d/m}$, such that the parallelepiped 
    \[ \cQ_\mu=\Big\{ \vec z\in\R^d \,\Big|\, \mu_s(\vec z)\leq\mu\lambda,\ \nu_s(\vec z)\leq\lambda \Big\} \]
    contains no non-zero points of $\La$ in its interior, but contains non-collinear points of $\La$ in its boundary. Then
    \[ \lambda\cB(s)\subseteq\cQ_{\mu}=\lambda''\cB(s''), \]
    where $\lambda''=\lambda\mu^{m/d}$, $s''=s+(n/d)\ln\mu$. For $\lambda''$, $s''$ we have
    \[ \lambda''\geq\lambda,\qquad s\leq s''\leq s-(n/m)\ln\lambda. \]
    Besides that, $s$, $s''$ also satisfy \eqref{eq:main_subseteq}, \eqref{eq:main_equals}, since $\lambda_1(\cB(s''))=\lambda_2(\cB(s''))=\lambda''$.

    It remains to apply Lemma \ref{l:main}.
  \end{proof}

  Having Corollary \ref{cor:for_inter_loranoyadenie}, it is easy now to prove Theorem \ref{t:inter_loranoyadenie_Psied}. 
  
  First, let us notice that if the system \eqref{eq:the_system} has a non-zero integer solution, then it has two linearly independent integer solutions, so in this case $\aPsi_1=\bPsi_1=-1$, $\bPsi_2=-2$, which implies \eqref{eq:inter_loranoyadenie_Psied}. 
  
  Next, let us suppose that the system \eqref{eq:the_system} has no non-zero integer solutions. Then there are infinitely many local minima of $\psi_1(s)$, each of them satisfies \eqref{eq:for_inter_loranoyadenie_condition}, and the sequence of these local minima tends to $\infty$. Moreover, $s'$ and $s''$ from Corollary \ref{cor:for_inter_loranoyadenie} tend to $\infty$ as $s$ tends to $\infty$. Indeed, since \eqref{eq:the_system} has no non-zero integer solutions, we have
  \[ e^{s(1+\psi_1(s))}=e^s\lambda_1(\cB(s))=\lambda_1(e^{-s}\cB(s))\to\infty\ \ \text{ as }\ \ s\to\infty, \]
  so
  \begin{equation} \label{eq:tending_to_infty}
    s(1+\psi_1(s))\to\infty\ \ \text{ as }\ \ s\to\infty.
  \end{equation}
  Particularly, it follows from \eqref{eq:tending_to_infty} that $\psi_1(s)$ is eventually greater than $-1$ (it can actually be shown that $\psi_1(s)>-1$ starting with the second local minimum of $\psi_1(s)$).
  Therefore,
  \begin{equation} \label{eq:for_inter_loranoyadenie_liminfsup}
    \bPsi_2\leq\liminf\Psi_2(s)\leq
    \begin{cases}
      2\liminf\psi_1(s)+d\cdot\limsup\dfrac{\psi_1(s')-\psi_1(s)}{n+n\psi_1(s')}\,, \\
      2\liminf\psi_1(s)+d\cdot\limsup\dfrac{\psi_1(s'')-\psi_1(s) \vphantom{\frac{\big|}{}} }{m-n\psi_1(s'')}\,,
    \end{cases}
  \end{equation}
  where the $\liminf$ and the $\limsup$ are taken over the set of local minima of $\psi_1(s)$. Since $\psi_1(s)$ is never positive, both denominators in \eqref{eq:for_inter_loranoyadenie_liminfsup} are eventually positive. Therefore, \eqref{eq:for_inter_loranoyadenie_liminfsup} implies \eqref{eq:inter_loranoyadenie_Psied}.

  \begin{corollary} \label{cor:for_inter_my_inequalities}
    Suppose that the system \eqref{eq:the_system} has no non-zero integer solutions. Then for each $s>0$ there is an $s'>0$, such that $s(1+\psi_1(s))\leq s'\leq s(1-(n/m)\psi_1(s))$, and
    \begin{equation} \label{eq:for_inter_my_inequalities}
      \Psi_2(s)\leq
      \begin{cases}
        \dfrac{(d-2)\psi_1(s')}{(n-1)+n\psi_1(s')}\,,\quad\text{ if }\ \psi_1(s')\geq\dfrac{m-n}{2n}\,, \\
        \dfrac{(d-2)\psi_1(s') \vphantom{\frac{\big|}{}} }{(m-1)-n\psi_1(s')}\,,\quad\text{ if }\ \psi_1(s')\leq\dfrac{m-n}{2n}\,.
      \end{cases}
    \end{equation}
  \end{corollary}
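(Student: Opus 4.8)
The plan is to carry out, for an arbitrary $s>0$, the same kind of construction as in the proof of Corollary \ref{cor:for_inter_loranoyadenie}, but without the balancing hypothesis \eqref{eq:for_inter_loranoyadenie_condition}. First I would fix $s$ and a minimal vector $\vec v_s$ of $\lambda_1(\cB(s))\cB(s)$. Since $\max\bigl(\mu_s(\vec v_s),\nu_s(\vec v_s)\bigr)=\lambda_1(\cB(s))$, one of the two relations $\mu_s(\vec v_s)=\lambda_1(\cB(s))$, $\nu_s(\vec v_s)=\lambda_1(\cB(s))$ holds. In the first (``$\mu$-type'') case the construction in the proof of Corollary \ref{cor:for_inter_loranoyadenie} — which uses only that relation, not the full balancing condition — produces an $s'\leq s$ with
\[ \lambda_1(\cB(s))\cB(s)\subseteq\lambda_1(\cB(s'))\cB(s'),\qquad \lambda_1(\cB(s'))=\lambda_2(\cB(s')),\qquad s(1+\psi_1(s))\leq s'\leq s; \]
in the second (``$\nu$-type'') case it produces an $s'\geq s$ with the analogous inclusion and equality and $s\leq s'\leq s(1-(n/m)\psi_1(s))$. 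Because that construction keeps the $\mu$-width (resp. the $\nu$-width) of the parallelepiped unchanged, one gets $s'(1+\psi_1(s'))=s(1+\psi_1(s))$ (resp. $s'(\psi_1(s')-m/n)=s(\psi_1(s)-m/n)$), and, since $1+\psi_1(\cdot)\geq0$ and $m/n-\psi_1(\cdot)>0$ everywhere, this forces $\psi_1(s)\leq\psi_1(s')\leq0$.

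Next I would feed $(s,s')$ into Lemma \ref{l:main}. Put $y=\psi_1(s')$. In the ``$\mu$-type'' case the lemma gives $\Psi_2(s)=\psi_1(s)+\psi_2(s)\leq L_1(\psi_1(s))$, where $L_1(x)=2x+d\cdot\frac{y-x}{n(1+y)}$ is affine in $x$ with slope $2-\frac{d}{n(1+y)}$, which is nonnegative exactly when $y\geq\frac{m-n}{2n}$; in the ``$\nu$-type'' case it gives $\Psi_2(s)\leq M_1(\psi_1(s))$ with $M_1(x)=2x+d\cdot\frac{y-x}{m-ny}$, whose slope $2-\frac{d}{m-ny}$ is nonnegative exactly when $y\leq\frac{m-n}{2n}$. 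In addition, Proposition \ref{prop:precise_inter_dyfel} (case $p=1$) gives, for every $s$ and with no case distinction, $\Psi_2(s)\leq L_2(\psi_1(s))$, where $L_2(x)=\frac{d-2}{d-1}x$ is increasing. The key point — the real content of the argument, I think — is to use both estimates simultaneously, i.e. $\Psi_2(s)\leq\min\bigl(L_1(\psi_1(s)),L_2(\psi_1(s))\bigr)$, and likewise with $M_1$ in place of $L_1$.

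Then comes the elementary optimisation. When the Lemma \ref{l:main} bound is monotone the ``right'' way — the ``$\mu$-type'' case with $y\geq\frac{m-n}{2n}$, or the ``$\nu$-type'' case with $y\leq\frac{m-n}{2n}$ — monotonicity together with $\psi_1(s)\leq y$ gives $\Psi_2(s)\leq 2y$, and the inequality $2y\leq\frac{(d-2)y}{(n-1)+ny}$ (resp. $2y\leq\frac{(d-2)y}{(m-1)-ny}$) is, after clearing the positive denominator and using $y\leq0$, literally the condition $y\geq\frac{m-n}{2n}$ (resp. $y\leq\frac{m-n}{2n}$); this is the first (resp. second) line of \eqref{eq:for_inter_my_inequalities}. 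In the complementary ``crossed'' regime the two lines inside the $\min$ have opposite slopes, so $\min(L_1,L_2)$ (resp. $\min(M_1,L_2)$) is a ``tent'' whose maximal value over all $x$ is attained where the two lines meet; a short computation identifies that abscissa, in the ``$\mu$-type'' case, as $x^\ast=\frac{(d-1)y}{(m-1)-ny}$ and the value there as $\frac{(d-2)y}{(m-1)-ny}$ (and, in the ``$\nu$-type'' case, the apex is $\frac{(d-2)y}{(n-1)+ny}$) — exactly the remaining lines of \eqref{eq:for_inter_my_inequalities}. Finally I would check, using only $\psi_1(\cdot)\leq0$ and $\psi_1(\cdot)\geq-1$, that the $s'$ produced above lies in $\bigl[s(1+\psi_1(s)),\,s(1-(n/m)\psi_1(s))\bigr]$, which closes the proof.

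The hardest part, I expect, will be the orchestration: keeping straight which facet $\vec v_s$ touches, which case of Lemma \ref{l:main} applies, which side of the threshold $\frac{m-n}{2n}$ the quantity $\psi_1(s')$ lies on, and which line of \eqref{eq:for_inter_my_inequalities} is then being proved — together with the observation that neither Lemma \ref{l:main} nor Proposition \ref{prop:precise_inter_dyfel} by itself is enough in the crossed regime (for instance in the ``$\mu$-type'' case whenever $m>n$, where $\psi_1(s')\leq0<\frac{m-n}{2n}$ always). Once one decides to intersect the two half-planes they cut out, the matching of the tent's apex with the right-hand sides of \eqref{eq:for_inter_my_inequalities} is routine algebra.
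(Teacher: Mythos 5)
Your proposal is correct and follows essentially the same route as the paper: split on which facet the minimal vector $\vec v_s$ touches, build $s'$ (or $s''$) via the Minkowski shrinking from the proof of Corollary~\ref{cor:for_inter_loranoyadenie}, feed $(s,s')$ into Lemma~\ref{l:main}, and combine the resulting affine bound with the bound $\Psi_2(s)\le\frac{d-2}{d-1}\psi_1(s)$ coming from Proposition~\ref{prop:precise_inter_dyfel}, with the case split governed by the sign of $2-\frac{d}{n(1+\psi_1(s'))}$ (resp.\ $2-\frac{d}{m-n\psi_1(s')}$). The only cosmetic deviation is in how you eliminate the unknown $\psi_1(s)$: in the ``aligned'' case you use the monotonicity of the Lemma~\ref{l:main} bound together with $\psi_1(s)\le\psi_1(s')$ (a consequence of $s(1+\psi_1(s))=s'(1+\psi_1(s'))$, resp.\ $s(\psi_1(s)-m/n)=s''(\psi_1(s'')-m/n)$), whereas the paper substitutes $\psi_1(s)\le\frac12\Psi_2(s)$; and in the ``crossed'' case your tent-apex computation is exactly the paper's substitution of $\psi_1(s)\ge\frac{d-1}{d-2}\Psi_2(s)$ into the negatively sloped line and solving the resulting self-referential inequality for $\Psi_2(s)$ — algebraically the same thing. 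One small point to flag: when $\psi_1(s')=-1$ (so the first line of Lemma~\ref{l:main} is unavailable and your $L_1$ degenerates), you should, as the paper does, fall back on $\Psi_2(s)\le\frac{d-2}{d-1}\psi_1(s)$ together with $\psi_1(s)=-1$ forced by $-1\le\psi_1(s)\le\psi_1(s')=-1$, which still yields the second line of~\eqref{eq:for_inter_my_inequalities}.
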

  
  \begin{proof}
    Assume that $\mu_s(\vec v_s)=\lambda_1(\cB(s))$. Then the same argument as in the proof of Corollary \ref{cor:for_inter_loranoyadenie} shows that there is an $s'$, such that $s(1+\psi_1(s))\leq s'\leq s$, and
    \begin{equation} \label{eq:corollary_n}
      \Psi_2(s)\leq2\psi_1(s)+d\cdot\dfrac{\psi_1(s')-\psi_1(s)}{n+n\psi_1(s')}\,,
    \end{equation}
    unless $\psi_1(s')=-1$.
    By Proposition \ref{prop:precise_inter_dyfel} we have
    \begin{equation} \label{eq:psi_sandwiched_by_Psi}
      \frac{d-1}{d-2}\Psi_2(s)\leq\psi_1(s)\leq\frac 12\Psi_2(s).
    \end{equation}
    If $\psi_1(s')=-1$, then \eqref{eq:psi_sandwiched_by_Psi} implies \eqref{eq:for_inter_my_inequalities}. Suppose that $\psi_1(s')\neq-1$. Then, taking into account that
    \begin{equation*} 
      2-\frac{d}{n+n\psi_1(s')}\geq0\quad\text{ if and only if }\quad\psi_1(s')\geq\frac{m-n}{2n}\,,
    \end{equation*}
    we conclude from \eqref{eq:corollary_n} and \eqref{eq:psi_sandwiched_by_Psi} that
    \begin{equation} \label{eq:if_s_prime_is_less_than_s}
      \Psi_2(s)\leq
      \begin{cases}
        \qquad\ 2\psi_1(s')\,,\qquad\quad\,\ \text{ if }\ \psi_1(s')\geq\dfrac{m-n}{2n}\,, \\
        \dfrac{(d-2)\psi_1(s')}{(m-1)-n\psi_1(s')}\,,\quad\text{ if }\ \psi_1(s')\leq\dfrac{m-n \vphantom{\frac{\big|}{}} }{2n}\,.
      \end{cases}
    \end{equation}
    
    Assume now that $\nu_s(\vec v_s)=\lambda_1(\cB(s))$. Then the same argument as in the proof of Corollary \ref{cor:for_inter_loranoyadenie} shows that there is an $s''$, such that $s\leq s''\leq s(1-(n/m)\psi_1(s))$, and
    \begin{equation} \label{eq:corollary_m}
      \Psi_2(s)\leq2\psi_1(s)+d\cdot\dfrac{\psi_1(s'')-\psi_1(s)}{m-n\psi_1(s'')}\,.
    \end{equation}
    Taking into account that
    \begin{equation*} 
      2-\frac{d}{m-n\psi_1(s'')}\geq0\quad\text{ if and only if }\quad\psi_1(s'')\leq\frac{m-n}{2n}\,,
    \end{equation*}
    we conclude from \eqref{eq:corollary_m} and \eqref{eq:psi_sandwiched_by_Psi} that
    \begin{equation} \label{eq:if_s_prime_is_greater_than_s}
      \Psi_2(s)\leq
      \begin{cases}
        \dfrac{(d-2)\psi_1(s'')}{(n-1)+n\psi_1(s'')}\,,\quad\text{ if }\ \psi_1(s'')\geq\dfrac{m-n}{2n}\,, \\
        \qquad\ 2\psi_1(s'')\,,\qquad\quad\ \text{ if }\ \psi_1(s'')\leq\dfrac{m-n \vphantom{\frac{\big|}{}} }{2n}\,.
      \end{cases}
    \end{equation}
    
    Since $\psi_1(s')$ and $\psi_1(s'')$ are negative, we have
    \[ 2\psi_1(s')\leq\dfrac{(d-2)\psi_1(s')}{(n-1)+n\psi_1(s')}\,,\qquad\text{ if }\ \psi_1(s')\geq\dfrac{m-n}{2n}\,, \]
    and
    \[ 2\psi_1(s'')\leq\dfrac{(d-2)\psi_1(s'')}{(m-1)-n\psi_1(s'')}\,,\quad\text{ if }\ \psi_1(s'')\leq\dfrac{m-n}{2n}\,. \]
    Therefore, \eqref{eq:if_s_prime_is_less_than_s} and \eqref{eq:if_s_prime_is_greater_than_s} imply the desired statement.
  \end{proof}
  
  Deriving Theorem \ref{t:inter_my_inequalities_Psied} from Corollary \ref{cor:for_inter_my_inequalities} is even easier than deriving Theorem \ref{t:inter_loranoyadenie_Psied} from Corollary \ref{cor:for_inter_loranoyadenie}.
  
  If the system \eqref{eq:the_system} has a non-zero integer solution, then $\aPsi_1=-1<\frac{m-n}{2n}$\,, and \eqref{eq:inter_my_inequalities_Psied} follows from \eqref{eq:Psi_inter_dyson}. Suppose now that \eqref{eq:the_system} has no non-zero integer solutions. Then it follows from \eqref{eq:tending_to_infty} that $s'$ from Corollary \ref{cor:for_inter_my_inequalities} tends to $\infty$ as $s$ tends to $\infty$. Hence, taking $\limsup$ of both sides in \eqref{eq:for_inter_my_inequalities}, we get \eqref{eq:inter_my_inequalities_Psied}.

\vskip 10mm

\noindent
Oleg N. {\sc German} \\
Moscow Lomonosov State University \\
Vorobiovy Gory, GSP--1 \\
119991 Moscow, RUSSIA \\
\emph{E-mail}: {\fontfamily{cmtt}\selectfont german@mech.math.msu.su, german.oleg@gmail.com}

\end{document}